\providecommand{\U}[1]{\protect\rule{.1in}{.1in}}
\providecommand{\U}[1]{\protect\rule{.1in}{.1in}}
\newtheorem{theorem}{Theorem}[section]
\theoremstyle{plain}
\newtheorem{corollary}[theorem]{Corollary}
\newtheorem{lemma}[theorem]{Lemma}
\newtheorem{remark}{Remark}[section]
\numberwithin{equation}{section}
\begin{document}
\title[$L^{p}$-convergence of Kantorovich type Max-Min Operators]{$L^{p}$-convergence of Kantorovich-type Max-Min Neural Network Operators}
\author[\.{I}SMA\.{I}L ASLAN, STEFANO DE MARCHI, WOLFGANG ERB]{\.{I}SMA\.{I}L ASLAN${^{\ast}}$, STEFANO DE MARCHI, WOLFGANG ERB}
\thanks{2020 \textit{Mathematics Subject Classification. } 41A30, 41A25}
\thanks{$^{\ast}$Corresponding author.}
\keywords{Kantorovich type neural network operators, nonlinear operators, sigmoidal
functions, $K$-functional, rate of approximation}

\begin{abstract}
In this work, we study the Kantorovich variant of max-min neural network
operators, in which the operator kernel is defined in terms of sigmoidal
functions. Our main aim is to demonstrate the $L^{p}$-convergence of these
nonlinear operators for $1\leq p<\infty$, which makes it possible to obtain
approximation results for functions that are not necessarily continuous. In
addition, we will derive quantitative estimates for the rate of approximation
in the $L^{p}$-norm. We will provide some explicit examples, studying the
approximation of discontinuous functions with the max-min operator, and
varying additionally the underlying sigmoidal function of the kernel. Further,
we numerically compare the $L^{p}$-approximation error with the respective
error of the Kantorovich variants of other popular neural network operators.
As a final application, we show that the Kantorovich variant has advantages
compared to the sampling variant of the max-min operator and Kantorovich variant of the max-product operator when it comes to
approximate noisy functions as for instance biomedical ECG signals.

\end{abstract}
\maketitle

\section{Introduction}

Neural network operators have been widely studied in approximation theory
\cite{anas1, anas3, anas4, anas5,anas2, anas7,is,costez, cost1, cost3,
cost4,cost2, cost25, cost5,cheang, cao,cao2}. Although originally mainly
linear neural network operators have been considered
\cite{anas1,anas4,anas5,anas2,costez,cost1,cost3,cost4,coroianu4,kad},
nonlinear versions of these operators have recently begun to attract the
attention of several research groups
\cite{anas7,is,cost8,cost2,cost25,cost5,cost7,cost9}.

Bede et al., changing the algebraic structure of summation and multiplication,
constructed pseudo-linear operators, which are in fact nonlinear \cite{bede1}.
Further, pseudo-linear versions of Shepard operators were investigated, and it
was shown that these operators could outperform the classical ones, both, in
the rate of approximation, as well as in computational complexity
\cite{bede1,bede2}. These results were the starting point of several further
research on this topic and, as far as today, a lot of studies have been
conducted on pseudo-linear operators \cite{bede3,
bede4,bede5,coroianu1,coroianu2,coroianu3,duman2,duman,aslan,gokcer2,gokcer,holhos1,yuksel}%
.

Costarelli and Spigler in \cite{cost1} studied positive linear neural network
operators activated by sigmoidal functions which, for sufficiently large
$n\in\mathbb{N}$, are given in the form%

\begin{equation}
F_{n}\left(  f;x\right)  :=\dfrac{\sum\limits_{k=\left\lceil na\right\rceil
}^{\left\lfloor nb\right\rfloor }f\left(  \frac{k}{n}\right)  \phi_{\sigma
}\left(  nx-k\right)  }{\sum\limits_{d=\left\lceil na\right\rceil
}^{\left\lfloor nb\right\rfloor }\phi_{\sigma}\left(  nx-d\right)  }\text{
\ \ (}x\in\left[  a,b\right]  \text{)}, \label{1}%
\end{equation}
where $\phi_{\sigma}$ is a suitable linear combination of sigmoidal activation
functions $\sigma:\mathbb{R}\rightarrow\mathbb{R}$ and $f:\left[  a,b\right]
\rightarrow\mathbb{R}$ is a bounded function. In the definition above,
$\left\lfloor \cdot\right\rfloor $ and $\left\lceil \cdot\right\rceil $ denote
the floor and the ceiling function for a given number. Subsequently,
Costarelli and Vinti in \cite{cost2} constructed the (nonlinear) max-product
operators
\begin{equation}
F_{n}^{\left(  M\right)  } \left(  f;x\right)  :=\dfrac{\bigvee
\limits_{k=\left\lceil na\right\rceil }^{\left\lfloor nb\right\rfloor
}f\left(  \frac{k}{n}\right)  \phi_{\sigma}\left(  nx-k\right)  }%
{\bigvee\limits_{d=\left\lceil na\right\rceil }^{\left\lfloor nb\right\rfloor
}\phi_{\sigma}\left(  nx-d\right)  }\text{ \ \ (}x\in\left[  a,b\right]
\text{)}, \label{2}%
\end{equation}
with $\bigvee\nolimits_{k=1}^{n}a_{k}:=\max\nolimits_{k=1,\cdots,n}a_{k}$, and
derived a uniform approximation theorem.

Later, in \cite{cost4, cost25}, Costarelli and his colleagues investigated
also the Kantorovich form of the operators in (\ref{1}) and (\ref{2}).
Furthermore, in \cite{is}, the (nonlinear) max-min case of these operators was
examined and it was shown that compared to the positive linear neural network
operators in (\ref{1}) the max-min approximation provides better results in
some cases. It is noteworthy to mention that the max-min case, in which the
product is substituted by the minimum, has not been studied very profoundly so
far compared to other neural network operators. We also note that the max-min
operators are neither linear, nor homogeneous.

The aim of this article is to analyze the Kantorovich form of the max-min
neural network operators and to obtain a convergence theorem for $L^{p}%
$-spaces. This allows to consider such operators also for the approximation of
discontinuous functions. While the max-min variant of Kantorovich operators
has already been considered in a previous article \cite{aslan}, to the best of
our knowledge, this is the first study of the convergence of the max-min
operators in $L^{p}$ spaces. Furthermore, we also aim to obtain refined
estimates for the rates of approximation of these operators. The last part of
the article includes some applications to better illustrate the understanding
of the topic and to see in which scenarios the max-min Kantorovich form has advantages
compared to max-product Kantorovich variant and max-min sampling variant of the operator.

\section{Max-min neural network operators}

Let $f:\left[  a,b\right]  \rightarrow\left[  0,1\right]  $ be a given
function. Then, for $x\in\left[  a,b\right]  $, the max-min neural network
operator is defined as (see \cite{is}):%
\begin{equation}
F_{n}^{\left(  m\right)  }\left(  f;x\right)  :=\bigvee\limits_{k=\left\lceil
na\right\rceil }^{\left\lfloor nb\right\rfloor }f\left(  \frac{k}{n}\right)
\wedge\dfrac{\phi_{\sigma}\left(  nx-k\right)  }{\bigvee\limits_{d=\left\lceil
na\right\rceil }^{\left\lfloor nb\right\rfloor }\phi_{\sigma}\left(
nx-d\right)  }, \label{3}%
\end{equation}
where $a\wedge b$ denotes $\min\left\{  a,b\right\}  $. Before introducing the
Kantorovich type max-min neural network operators, we need some additional
definitions and assumptions.

For a given function $\sigma:\mathbb{R}\rightarrow\mathbb{R}$, we say that
$\sigma$ is a sigmoidal function if $\lim_{x\rightarrow-\infty}\sigma\left(
x\right)  =0$ and $\lim_{x\rightarrow\infty}\sigma\left(  x\right)  =1.$ For
the rest of the paper, we assume that $\sigma$ is a nondecreasing sigmoidal
function. We also assume that $\sigma\left(  3\right)  >\sigma\left(
1\right)  $, which is only a minor restriction that prevents some technical issues.

Beside the above definitions, the following conditions are needed.

\begin{enumerate}
\item[$\left(  \Sigma1\right)  $] $\sigma\left(  x\right)  -1/2$ is an odd
function on the real line,

\item[$\left(  \Sigma2\right)  $] $\sigma\in C^{2}\left(  \mathbb{R}\right)  $
is concave for all $x\in\mathbb{R}_{0}^{+},$

\item[$\left(  \Sigma3\right)  $] $\sigma\left(  x\right)  =O(\left\vert
x\right\vert ^{-(1+\alpha)})$ as $x\rightarrow-\infty$ for some $\alpha>0.$
\end{enumerate}

For the rest of this paper, we use the \textquotedblleft$\alpha$%
\textquotedblright\ symbol exclusively in connection to the condition
specified in $\left(  \Sigma3\right)  $. The kernel $\phi_{\sigma}$ (also
called \textquotedblleft centered bell shaped function\textquotedblright\ in
\cite{carda})\ in the definition of the neural network operator is given by%
\[
\phi_{\sigma}\left(  x\right)  :=\frac{1}{2}\left(  \sigma\left(  x+1\right)
-\sigma\left(  x-1\right)  \right)  \text{ \ for all }x\in\mathbb{R}\text{.}%
\]
We note that, by definition, the kernel $\phi_{\sigma}$ is not necessarily
compactly supported.

From the definitions and assumption above, it is possible to obtain the
following properties of $\phi_{\sigma}$ (see\cite{cost1}).

\begin{lemma}
\label{lemma1}

\begin{enumerate}
\item $\phi_{\sigma}\left(  x\right)  \geq0$ for all $x\in\mathbb{R}$ and
$\phi_{\sigma}\left(  2\right)  >0$,

\item $\lim_{x\rightarrow\pm\infty}\phi_{\sigma}\left(  x\right)  =0$,

\item $\phi_{\sigma}$ is nondecreasing if $x<0$ and nonincreasing if $x\geq0$
(therefore $\phi_{\sigma}\left(  0\right)  \geq\phi_{\sigma}\left(  x\right)
$ for all $x\in\mathbb{R)},$

\item $\phi_{\sigma}\left(  x\right)  =O(\left\vert x\right\vert ^{-\left(
1+\alpha\right)  })$ as $x\rightarrow\pm\infty$ where $\alpha$ refers to the
decay rate in $\left(  \Sigma3\right)  ,$ i.e., there exist $M,L>0$ such that
$\phi_{\sigma}\left(  x\right)  \leq M\left\vert x\right\vert ^{-\left(
1+\alpha\right)  }$ if $\left\vert x\right\vert >L.$

\item $\phi_{\sigma}\left(  x\right)  $ is an even function.

\item For all $x\in\mathbb{R},$ $\sum\limits_{k\in\mathbb{Z}}\phi_{\sigma
}\left(  x-k\right)  =1.$
\end{enumerate}
\end{lemma}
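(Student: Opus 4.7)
The plan is to verify the six properties in the order 1, 5, 2, 3, 4, 6, since property 5 (evenness) simplifies the argument for 2, 3 and 4, and property 6 is essentially independent.

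For property 1, monotonicity of $\sigma$ gives $\sigma(x+1)\ge\sigma(x-1)$ for every $x$, so $\phi_\sigma(x)\ge 0$; and $\phi_\sigma(2)=\tfrac12(\sigma(3)-\sigma(1))>0$ by the standing assumption $\sigma(3)>\sigma(1)$. For property 5, the oddness of $\sigma-\tfrac12$ in $(\Sigma 1)$ is equivalent to the relation $\sigma(-t)=1-\sigma(t)$; applying this to both terms defining $\phi_\sigma(-x)$ I recover $\phi_\sigma(x)$. Property 2 then follows immediately: as $x\to+\infty$ both $\sigma(x\pm 1)\to 1$, so their difference tends to $0$, and the case $x\to-\infty$ is obtained from evenness (or directly from both $\sigma$-values tending to $0$).

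For property 3, by evenness it suffices to show that $\phi_\sigma$ is nonincreasing on $[0,\infty)$. Using the identity $\sigma(x-1)=1-\sigma(1-x)$ valid for all $x$, I can write
\[
\phi_\sigma(x)=\tfrac12\bigl(\sigma(x+1)+\sigma(1-x)-1\bigr),
\]
so $\phi_\sigma'(x)=\tfrac12(\sigma'(x+1)-\sigma'(1-x))$. On $[0,1]$ both arguments $x+1$ and $1-x$ lie in $[0,2]\subset\mathbb{R}_0^+$, and concavity in $(\Sigma 2)$ forces $\sigma'$ to be nonincreasing there; since $x+1\ge 1-x$, the derivative is $\le 0$. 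On $[1,\infty)$ the same inequality follows directly from the concavity of $\sigma$ on $\mathbb{R}_0^+$ applied to $\sigma'(x+1)\le\sigma'(x-1)$. Property 4 is an immediate consequence of $(\Sigma 3)$: for $x\to-\infty$ the triangle inequality bounds $\phi_\sigma(x)$ by $\tfrac12(|\sigma(x+1)|+|\sigma(x-1)|)=O(|x|^{-(1+\alpha)})$, and the decay for $x\to+\infty$ follows from evenness.

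Property 6 is the one step that requires a genuine calculation, and it is the main obstacle because the sums of $\sigma(x-k\pm 1)$ are individually divergent. I plan to work with the symmetric partial sum
\[
S_N(x)=\sum_{k=-N}^{N}\phi_\sigma(x-k)=\tfrac12\sum_{k=-N}^{N}\bigl(\sigma(x-k+1)-\sigma(x-k-1)\bigr),
\]
reindex each of the two inner sums by the shifts $m=-k+1$ and $m=-k-1$, and observe the telescoping cancellation, which leaves only the four boundary terms
\[
S_N(x)=\tfrac12\bigl(\sigma(x+N+1)+\sigma(x+N)-\sigma(x-N)-\sigma(x-N-1)\bigr).
\]
Passing to the limit $N\to\infty$ and using the sigmoidal limits $\sigma(\pm\infty)\in\{0,1\}$ yields $\lim_{N\to\infty}S_N(x)=1$. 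Absolute convergence of the full series is guaranteed by the $O(|k|^{-(1+\alpha)})$ decay from property 4, so the symmetric limit equals the unordered sum and the identity holds for every $x\in\mathbb{R}$.
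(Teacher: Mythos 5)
Your proof is correct in all six parts; in particular, the reflection identity $\sigma(x-1)=1-\sigma(1-x)$ is exactly the right device for handling property $(3)$ on $[0,1]$, where $x-1<0$ falls outside the region of assumed concavity, and the telescoping computation for property $(6)$ together with nonnegativity of the terms legitimately identifies the symmetric limit with the unordered sum. The paper does not actually prove Lemma \ref{lemma1} but defers entirely to \cite{cost1}; your argument reproduces the standard proof given there, so there is nothing substantive to contrast.
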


Note that from $(3)$ and $(6)$ of Lemma \ref{lemma1}, it is not hard to see
that $\phi_{\sigma}\in L^{1}\left(  \mathbb{R}\right)  $ (see also\ Remark 1
in \cite{cost25}). Furthermore, by the definition of $\sigma$ and
$\phi_{\sigma}$, we have $\phi_{\sigma}\left(  x\right)  \leq1/2$ for all
$x\in\mathbb{R}$.

\begin{remark}
\label{rem0} We note that, as in \cite{cost1, cost2}, the condition $\left(
\Sigma2\right)  $ is only used to prove Lemma \ref{lemma1} $(3)$ above. Thus,
the reader can infer that our theory is also valid for non-smooth sigmoidal
functions, which satisfy the property $(3)$ in Lemma \ref{lemma1} instead of
the condition $\left(  \Sigma2\right)  $.
\end{remark}

The following lemma is required for the well-definiteness of the Kantorovich
type operator given in (\ref{a}).

\begin{lemma}
(see \cite{cost2})\label{lemma2}

\begin{enumerate}
\item For a given $x\in\mathbb{R}$,
\[
\bigvee\limits_{k\in\mathbb{Z}}\phi_{\sigma}\left(  nx-k\right)  \geq
\phi_{\sigma}\left(  2\right)  >0
\]
holds for all $n\in\mathbb{N},$

\item Let the interval $\left[  a,b\right]  $ be given. Then for each
$x\in\left[  a,b\right]  $
\[
\bigvee\limits_{k=\left\lceil na\right\rceil }^{\left\lfloor nb\right\rfloor
-1}\phi_{\sigma}\left(  nx-k\right)  \geq\phi_{\sigma}\left(  2\right)  >0
\]
for sufficiently large $n\in\mathbb{N}$.
\end{enumerate}
\end{lemma}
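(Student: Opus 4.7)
The plan is to reduce both claims to the same elementary observation: once we find an integer $k$ within distance $2$ of $nx$ (with $k$ in the required index range), Lemma \ref{lemma1} parts $(3)$ and $(5)$ force $\phi_{\sigma}(nx-k)\geq \phi_{\sigma}(2)$, and part $(1)$ then gives $\phi_{\sigma}(2)>0$. Everything therefore reduces to a counting argument about integers in an interval.

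For item $(1)$, given $x\in\mathbb{R}$ and $n\in\mathbb{N}$, I would simply choose $k^{\ast}=\lfloor nx\rfloor\in\mathbb{Z}$, so that $|nx-k^{\ast}|\leq 1\leq 2$. Since $\phi_{\sigma}$ is even and nonincreasing on $[0,\infty)$ by Lemma \ref{lemma1}, we obtain
\[
\bigvee_{k\in\mathbb{Z}}\phi_{\sigma}(nx-k)\geq \phi_{\sigma}(nx-k^{\ast})=\phi_{\sigma}(|nx-k^{\ast}|)\geq \phi_{\sigma}(2)>0.
\]

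For item $(2)$, the extra difficulty is that $k^{\ast}=\lfloor nx\rfloor$ need not lie in the truncated range $\{\lceil na\rceil,\dots,\lfloor nb\rfloor-1\}$. My plan is to take $n$ large enough that $\lfloor nb\rfloor-1\geq\lceil na\rceil$ (which holds as soon as $n(b-a)>2$, say) and then define
\[
k^{\ast}:=\max\bigl\{\lceil na\rceil,\ \min\{\lfloor nb\rfloor-1,\ \lfloor nx\rfloor\}\bigr\}.
\]
A short case analysis shows $|nx-k^{\ast}|\leq 2$ in each of the three possibilities: if $\lfloor nx\rfloor$ already lies in the range, the distance is at most $1$; if $\lfloor nx\rfloor<\lceil na\rceil$, then $na\leq nx<\lceil na\rceil$ yields $|nx-\lceil na\rceil|\leq 1$; if $\lfloor nx\rfloor>\lfloor nb\rfloor-1$, then $\lfloor nb\rfloor-1<nx\leq nb<\lfloor nb\rfloor+1$, giving $|nx-(\lfloor nb\rfloor-1)|<2$. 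Applying Lemma \ref{lemma1} $(3)$, $(5)$ as before concludes the proof.

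The only mildly delicate step is the third case of the analysis above, where the bound $2$ (rather than $1$) is essential and explains why the statement is phrased with $\phi_{\sigma}(2)$ instead of, e.g., $\phi_{\sigma}(1)$; this also clarifies the role of the assumption $\sigma(3)>\sigma(1)$ (equivalently $\phi_{\sigma}(2)>0$) made at the beginning of Section 2.
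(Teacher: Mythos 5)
Your argument is correct, and since the paper itself gives no proof of this lemma (it is quoted from Costarelli--Vinti \cite{cost2}), there is nothing to diverge from: your strategy of clamping $\lfloor nx\rfloor$ into the truncated index range, checking that the resulting index lies within distance $2$ of $nx$, and then invoking the evenness and monotonicity of $\phi_{\sigma}$ (Lemma \ref{lemma1} $(3)$, $(5)$) together with $\phi_{\sigma}(2)>0$ is exactly the standard argument. Your case analysis is sound, and you correctly identify both the role of ``sufficiently large $n$'' (ensuring $\lceil na\rceil\leq\lfloor nb\rfloor-1$ so the clamp is well defined) and why the constant is $\phi_{\sigma}(2)$ rather than $\phi_{\sigma}(1)$.
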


If the index set has infinite elements, then \textquotedblleft$\vee
$\textquotedblright\ corresponds to the supremum.

\begin{lemma}
(see \cite{cost2})\label{lemma0} For every $\delta>0,$ we get%
\[
\bigvee\limits_{\overset{k\in\mathbb{Z}}{\left\vert x-k\right\vert >n\delta}%
}\phi_{\sigma}\left(  x-k\right)  =O\left(  n^{-\left(  1+\alpha\right)
}\right)  \text{ as }n\rightarrow\infty
\]
\newline uniformly in $x\in\mathbb{R}$.
\end{lemma}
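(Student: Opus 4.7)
The plan is to reduce everything to the algebraic decay estimate in Lemma \ref{lemma1} (4). By that property, there exist constants $M,L>0$ such that
\[
\phi_{\sigma}(y) \;\leq\; M\,|y|^{-(1+\alpha)} \qquad \text{whenever } |y|>L.
\]
So the first step is to fix $\delta>0$ and choose $n_{0}\in\mathbb{N}$ large enough that $n\delta>L$ for every $n\geq n_{0}$; this way the assumption $|x-k|>n\delta$ automatically forces $|x-k|$ to lie in the regime where the algebraic decay bound is in force.

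Next, for any $n\geq n_{0}$, any $x\in\mathbb{R}$ and any $k\in\mathbb{Z}$ with $|x-k|>n\delta$, I combine the bound above with the monotonicity of $t\mapsto t^{-(1+\alpha)}$ on $(0,\infty)$ to obtain
\[
\phi_{\sigma}(x-k)\;\leq\; M\,|x-k|^{-(1+\alpha)}\;<\; M\,(n\delta)^{-(1+\alpha)}\;=\;\frac{M\,\delta^{-(1+\alpha)}}{n^{\,1+\alpha}}.
\]
Because the right-hand side does not depend on $k$ or on $x$, taking the supremum over all admissible $k$ preserves the estimate, giving
\[
\bigvee_{\substack{k\in\mathbb{Z}\\ |x-k|>n\delta}}\phi_{\sigma}(x-k)\;\leq\; M\,\delta^{-(1+\alpha)}\,n^{-(1+\alpha)},
\]
uniformly in $x\in\mathbb{R}$. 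This is exactly the claimed $O(n^{-(1+\alpha)})$ asymptotic.

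In truth there is no real obstacle here: the supremum-versus-sum distinction that usually complicates max-min/max-product arguments is harmless in this estimate, because the uniform pointwise bound $M(n\delta)^{-(1+\alpha)}$ majorises every term at once. The only point that has to be handled with a little care is making sure that \emph{all} indices $k$ with $|x-k|>n\delta$ fall inside the algebraic-decay tail $|y|>L$, which is why we impose the mild threshold $n\geq L/\delta$ at the outset; the statement is asymptotic, so this restriction is immaterial.
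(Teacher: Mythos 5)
Your argument is correct and is exactly the standard one: the paper itself does not prove this lemma but cites \cite{cost2}, where the proof proceeds precisely as you do, by invoking the algebraic decay of $\phi_{\sigma}$ from Lemma \ref{lemma1} (4) and noting that the uniform pointwise bound $M(n\delta)^{-(1+\alpha)}$ passes to the supremum over $k$ and is independent of $x$. Nothing is missing.
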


We further state some well-known properties of the $\max$ and $\min$ operations.

\begin{lemma}
(see \cite{bede5})\label{lemma3} If $\bigvee\limits_{k\in\mathbb{Z}}%
a_{k}<\infty$ or $\bigvee\limits_{k\in\mathbb{Z}}b_{k}<\infty$, then there
holds
\[
\left\vert \bigvee\limits_{k\in\mathbb{Z}}a_{k}-\bigvee\limits_{k\in
\mathbb{Z}}b_{k}\right\vert \leq\bigvee\limits_{k\in\mathbb{Z}}\left\vert
a_{k}-b_{k}\right\vert .
\]

\end{lemma}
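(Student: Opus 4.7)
The plan is to recognise this as the standard fact that the supremum is a $1$-Lipschitz functional on sequences. By the symmetry of the statement I may assume, without loss of generality, that $\bigvee_{k\in\mathbb{Z}} b_{k}<\infty$; the alternative hypothesis is handled by swapping the roles of $a$ and $b$.

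First I would dispose of the degenerate case $\bigvee_{k\in\mathbb{Z}} a_{k}=+\infty$. In that situation, for every $N>0$ one can pick an index $k$ with $a_{k} > N + \bigvee_{j\in\mathbb{Z}} b_{j} \geq N + b_{k}$, so that $|a_{k}-b_{k}| > N$. This forces $\bigvee_{k\in\mathbb{Z}} |a_{k}-b_{k}|=+\infty$, and the inequality is then trivially satisfied when the left-hand side is interpreted as $+\infty$.

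The substantive case is when both suprema are finite. Here the argument is a triangle-inequality bookkeeping: for every fixed $k\in\mathbb{Z}$ one has
\[
a_{k} \,=\, b_{k}+(a_{k}-b_{k}) \,\leq\, \bigvee_{j\in\mathbb{Z}} b_{j} \,+\, |a_{k}-b_{k}| \,\leq\, \bigvee_{j\in\mathbb{Z}} b_{j} \,+\, \bigvee_{j\in\mathbb{Z}} |a_{j}-b_{j}|.
\]
Taking the supremum over $k$ on the left yields $\bigvee_{k} a_{k} - \bigvee_{k} b_{k} \leq \bigvee_{k} |a_{k}-b_{k}|$, where the subtraction is legitimate thanks to the finiteness of $\bigvee_{k} b_{k}$. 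Exchanging the roles of $a$ and $b$ in the same estimate (which is now also legal, because in the present case $\bigvee_{k} a_{k}<\infty$ as well) provides the reverse bound, and combining the two inequalities gives the claim.

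The only real obstacle is the bookkeeping of possible $+\infty$ values: the finiteness hypothesis on one of the two suprema is exactly what is needed to ensure that the difference $\bigvee_{k} a_{k} - \bigvee_{k} b_{k}$ on the left-hand side is well-defined (never of the indeterminate form $\infty-\infty$), and to justify the rearrangement step after taking the supremum. Beyond that caveat, the rest is purely the monotonicity of the supremum together with the pointwise triangle inequality $a_{k} \leq b_{k} + |a_{k}-b_{k}|$.
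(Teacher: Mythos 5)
Your proof is correct. Note that the paper itself does not prove this lemma at all; it is quoted from the reference of Bede, Coroianu and Gal on max-product operators, so there is nothing in the paper to compare against. Your argument is the standard one for that cited fact: the pointwise bound $a_{k}\leq \bigvee_{j}b_{j}+\bigvee_{j}\lvert a_{j}-b_{j}\rvert$, a supremum over $k$, symmetrization, and a separate (correct) disposal of the case where one supremum is infinite, which is exactly the role of the finiteness hypothesis in the statement. In the paper's applications the sequences lie in $[0,1]$, so the degenerate case never actually arises, but handling it makes your proof match the lemma as stated.
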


\begin{lemma}
(see \cite{bede1})\label{lemma4} For all $x,y,z\in\left[  0,1\right]  ,$ there
holds%
\[
\left\vert x\wedge y-x\wedge z\right\vert \leq x\wedge\left\vert
y-z\right\vert \text{,}%
\]
where $a\wedge b$ denotes the $\min\left\{  a,b\right\}  $.
\end{lemma}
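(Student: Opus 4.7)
The plan is to establish the inequality by proving the two bounds $|x\wedge y - x\wedge z|\le x$ and $|x\wedge y - x\wedge z|\le |y-z|$ separately, since their conjunction is exactly $|x\wedge y - x\wedge z|\le x\wedge|y-z|$. Because the statement is symmetric in $y$ and $z$, I would assume without loss of generality that $y\le z$, in which case monotonicity of the minimum gives $x\wedge y\le x\wedge z$, and the absolute value simplifies to $(x\wedge z)-(x\wedge y)$.

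First I would dispatch the easy bound $|x\wedge y - x\wedge z|\le x$. Since $x\in[0,1]$ and $y,z\ge 0$, both $x\wedge y$ and $x\wedge z$ lie in the interval $[0,x]$, so their difference is at most $x$ in absolute value. This takes care of one half of the $\min$ on the right hand side without any case analysis.

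Next I would handle the Lipschitz-type bound $|x\wedge y - x\wedge z|\le |y-z|$ by a short case split on where $x$ lies relative to $y\le z$. In the case $z\le x$ one has $x\wedge y=y$ and $x\wedge z=z$, so the difference equals $z-y=|y-z|$. In the case $y\le x<z$ one has $x\wedge y=y$ and $x\wedge z=x$, and then $x-y\le z-y=|y-z|$ because $x<z$. Finally, if $x<y$, then $x\wedge y=x\wedge z=x$ and the difference is zero. Combining these two bounds yields $|x\wedge y-x\wedge z|\le\min\{x,|y-z|\}=x\wedge|y-z|$, which is the claim.

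There is no real obstacle here: the argument is elementary and only requires being careful that $x\ge 0$ (so that $x\wedge y\ge 0$) and that $\min$ is monotone and $1$-Lipschitz in each argument. The only mildly subtle point is noticing that one needs \emph{both} bounds, not just the Lipschitz one, in order to recover the $x\wedge|y-z|$ on the right; once this observation is made the proof reduces to the three-case check above.
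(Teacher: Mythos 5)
Your proof is correct and complete: the decomposition of the claim into the two separate bounds $|x\wedge y-x\wedge z|\le x$ and $|x\wedge y-x\wedge z|\le |y-z|$, each verified by elementary case analysis, is exactly what is needed, and all three cases in the Lipschitz bound check out. Note that the paper itself gives no proof of this lemma --- it is quoted from the reference of Bede et al. --- so there is no in-paper argument to compare against; your elementary verification is the natural one and fills that gap correctly.
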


\begin{lemma}
(see \cite{bede1})\label{lemma6} For any $a_{k}\geq0$ and $p\geq0,$ we have
\[
\biggl(\bigvee\limits_{k\in\mathbb{Z}}a_{k}\biggl)^{p}=\bigvee\limits_{k\in
\mathbb{Z}}a_{k}^{p}%
\]
and
\[
\biggl(\bigwedge\limits_{k\in\mathbb{Z}}a_{k}\biggl)^{p}=\bigwedge
\limits_{k\in\mathbb{Z}}a_{k}^{p}.
\]

\end{lemma}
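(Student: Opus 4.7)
The plan is to reduce both identities to a single fact: for $p \geq 0$, the map $t \mapsto t^p$ is nondecreasing on $[0,\infty)$, and for $p>0$ it is also continuous there. Any such map is well known to commute with suprema and infima of nonnegative families, and this is precisely the content of the lemma. The degenerate case $p=0$ can be disposed of at the outset under the convention $0^0=1$, since both sides of each identity then equal $1$ whenever the index set is nonempty, so I may assume $p>0$ throughout the remainder.

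First I would establish the supremum identity. Set $M := \bigvee_{k\in\mathbb{Z}} a_k$. Monotonicity of $t\mapsto t^p$ and $a_k \leq M$ give $a_k^p \leq M^p$ for every $k$, so $\bigvee_k a_k^p \leq M^p$. For the reverse inequality, I would split into two cases. If $M < \infty$, then for each $\varepsilon > 0$ continuity of $t\mapsto t^p$ at $M$ supplies a $\delta > 0$ such that $(M-\delta)^p > M^p - \varepsilon$; the definition of the supremum produces an index $k_0$ with $a_{k_0} > M-\delta$, and hence $\bigvee_k a_k^p \geq a_{k_0}^p > M^p - \varepsilon$. Letting $\varepsilon \to 0^+$ closes this case. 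If $M = \infty$, then the $a_{k_0}$ can be chosen arbitrarily large, and since $t^p$ is unbounded above for $p>0$, both sides coincide as $+\infty$.

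The infimum identity is handled symmetrically. With $m := \bigwedge_k a_k$, monotonicity yields $m^p \leq a_k^p$ for all $k$, whence $m^p \leq \bigwedge_k a_k^p$. Conversely, for each $\varepsilon > 0$, continuity of $t \mapsto t^p$ at $m$ provides $\delta > 0$ with $(m+\delta)^p < m^p + \varepsilon$; the definition of the infimum gives an index $k_0$ with $a_{k_0} < m + \delta$, so $\bigwedge_k a_k^p \leq a_{k_0}^p < m^p + \varepsilon$, and $\varepsilon \to 0^+$ finishes the argument.

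\emph{Expected main obstacle.} There is essentially none of analytic substance; the proof is a routine consequence of the monotonicity and continuity of $t\mapsto t^p$. The only two points demanding mild care are the boundary convention at $p=0$ and the case $M=\infty$ in the supremum identity, both of which are handled by bookkeeping rather than by genuine mathematical work. This is why the paper can afford to cite the statement from \cite{bede1} without further comment.
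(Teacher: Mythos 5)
Your argument is correct. The paper itself gives no proof of this lemma (it is simply quoted from \cite{bede1}), and your epsilon--delta verification via the monotonicity and continuity of $t\mapsto t^{p}$ is exactly the standard argument one would supply; the only cosmetic point left implicit is the case $M=0$ in the supremum step, where $(M-\delta)^{p}$ is not defined for $\delta>0$, but there all $a_{k}$ vanish and both sides are trivially $0$.
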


\section{Kantorovich type max-min neural network operators}

Now, instead of $f\left(  \frac{k}{n}\right)  $ in \eqref{3}, taking the
average value of $f$ on $\left[  \frac{k}{n},\frac{k+1}{n}\right]  $, we
construct the Kantorovich type max-min neural network operator as follows%
\begin{equation}
K_{n}^{\left(  m\right)  }\left(  f;x\right)  :=\bigvee\limits_{k=\left\lceil
na\right\rceil }^{\left\lfloor nb\right\rfloor -1}n\int\limits_{\frac{k}{n}%
}^{\frac{k+1}{n}}f\left(  u\right)  du\wedge\dfrac{\phi_{\sigma}\left(
nx-k\right)  }{\bigvee\limits_{d=\left\lceil na\right\rceil }^{\left\lfloor
nb\right\rfloor -1}\phi_{\sigma}\left(  nx-d\right)  }\text{ \ \ (}x\in\left[
a,b\right]  \text{)}, \label{a}%
\end{equation}
where $f:\left[  a,b\right]  \rightarrow\left[  0,1\right]  $ is measurable
and $n\in\mathbb{N}$ is sufficiently large such that $\left\lceil
na\right\rceil \leq\left\lfloor nb\right\rfloor $. Lemma \ref{lemma2}
guarantees that the denominator in the operator is different from zero.
Therefore,
\begin{align*}
K_{n}^{\left(  m\right)  }\left(  f;x\right)   &  \leq\bigvee
\limits_{k=\left\lceil na\right\rceil }^{\left\lfloor nb\right\rfloor
-1}1\wedge\dfrac{\phi_{\sigma}\left(  nx-k\right)  }{\bigvee
\limits_{d=\left\lceil na\right\rceil }^{\left\lfloor nb\right\rfloor -1}%
\phi_{\sigma}\left(  nx-d\right)  }\\
&  =\bigvee\limits_{k=\left\lceil na\right\rceil }^{\left\lfloor
nb\right\rfloor -1}\dfrac{\phi_{\sigma}\left(  nx-k\right)  }{\bigvee
\limits_{d=\left\lceil na\right\rceil }^{\left\lfloor nb\right\rfloor -1}%
\phi_{\sigma}\left(  nx-d\right)  }\\
&  =1<\infty,
\end{align*}
that is, the Kantorovich type max-min operator is well-defined. Some important
properties of the max-min neural network operator are given in the following lemma.

\begin{lemma}
\label{lemma5} Let $f,g:\left[  a,b\right]  \rightarrow\left[  0,1\right]  $
be two measurable functions.

\begin{enumerate}
\item[(a)] If $\sigma$ is continuous function on $\mathbb{R}$, then
$K_{n}^{\left(  m\right)  }\left(  f\right)  $ is continuous on $\left[
a,b\right]  $,

\item[(b)] If $f\left(  x\right)  \leq g\left(  x\right)  $ for all
$x\in\left[  a,b\right]  ,$ then $K_{n}^{\left(  m\right)  }\left(
f;x\right)  \leq K_{n}^{\left(  m\right)  }\left(  g;x\right)  $ for all
$x\in\left[  a,b\right]  $,

\item[(c)] $K_{n}^{\left(  m\right)  }$ is sublinear, that is, $K_{n}^{\left(
m\right)  }\left(  f+g;x\right)  \leq K_{n}^{\left(  m\right)  }\left(
f;x\right)  +K_{n}^{\left(  m\right)  }\left(  g;x\right)  $ for all
$x\in\left[  a,b\right]  $,

\item[(d)] $\left\vert K_{n}^{\left(  m\right)  }\left(  f;x\right)
-K_{n}^{\left(  m\right)  }\left(  g;x\right)  \right\vert \leq K_{n}^{\left(
m\right)  }\left(  \left\vert f-g\right\vert ;x\right)  $ for all $x\in\left[
a,b\right]  $.
\end{enumerate}
\end{lemma}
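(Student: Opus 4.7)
The plan is to handle the four assertions in turn, all of which reduce to elementary properties of the $\max$/$\min$ operations combined with Lemmas \ref{lemma2}--\ref{lemma4}.

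For part (a), the idea is that continuity of $\sigma$ transfers to $\phi_\sigma$ and hence to the finite maximum $\bigvee_{d=\lceil na\rceil}^{\lfloor nb\rfloor-1}\phi_\sigma(nx-d)$, which by Lemma \ref{lemma2} is bounded away from zero on $[a,b]$. Hence each ratio $\phi_\sigma(nx-k)/\bigvee_d \phi_\sigma(nx-d)$ is continuous in $x$, taking the minimum with the constant $n\int_{k/n}^{(k+1)/n} f(u)\,du$ preserves continuity, and so does the subsequent finite maximum. For part (b), if $f\leq g$ pointwise, then $n\int_{k/n}^{(k+1)/n} f\,du \leq n\int_{k/n}^{(k+1)/n} g\,du$ for each admissible $k$; since $a\wedge c$ is monotone in $a$ and $\bigvee$ is monotone in each entry, the inequality $K_n^{(m)}(f;x)\leq K_n^{(m)}(g;x)$ follows directly.

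For part (c), I would first establish the elementary inequality $(A+B)\wedge C\leq (A\wedge C)+(B\wedge C)$ for $A,B,C\geq 0$ by a short case analysis on whether $A+B\leq C$ or not. Applied with $A=n\int f$, $B=n\int g$, $C=\phi_\sigma(nx-k)/\bigvee_d\phi_\sigma(nx-d)$, this gives the termwise bound, and then sublinearity follows from $\bigvee_k(a_k+b_k)\leq \bigvee_k a_k+\bigvee_k b_k$.

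Part (d) is the main technical point, although still short. First apply Lemma \ref{lemma3} to move the absolute difference inside the maximum over $k$. Then, noting that $f,g:[a,b]\to[0,1]$ imply that both $n\int_{k/n}^{(k+1)/n} f\,du$ and $n\int_{k/n}^{(k+1)/n} g\,du$ and the ratio $\phi_\sigma(nx-k)/\bigvee_d\phi_\sigma(nx-d)$ lie in $[0,1]$, Lemma \ref{lemma4} is applicable and yields
\[
\Bigl| n\!\!\int_{k/n}^{(k+1)/n}\!\!\!\! f\,du \wedge \tfrac{\phi_\sigma(nx-k)}{\bigvee_d\phi_\sigma(nx-d)} - n\!\!\int_{k/n}^{(k+1)/n}\!\!\!\! g\,du \wedge \tfrac{\phi_\sigma(nx-k)}{\bigvee_d\phi_\sigma(nx-d)} \Bigr| \leq \tfrac{\phi_\sigma(nx-k)}{\bigvee_d\phi_\sigma(nx-d)} \wedge n\!\!\int_{k/n}^{(k+1)/n}\!\!\!\! |f-g|\,du,
\]
where we also used the triangle inequality for integrals together with monotonicity of $c\wedge\,\cdot\,$ in the second argument. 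Taking $\bigvee_k$ on both sides gives precisely $K_n^{(m)}(|f-g|;x)$ on the right. The only thing to be vigilant about is checking the hypothesis $x,y,z\in[0,1]$ of Lemma \ref{lemma4}, which is where the normalization by the maximum denominator and the fact that $f,g$ are $[0,1]$-valued are both genuinely used; beyond that, no part of the lemma presents a substantial obstacle.
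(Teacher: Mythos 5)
Your proof is correct and follows exactly the route the paper intends: the paper omits the argument, stating only that it ``can be easily obtained from the definition of the operator and the lemmas above,'' and your write-up supplies precisely those details, using Lemma \ref{lemma2} for (a), monotonicity of $\wedge$ and $\bigvee$ for (b), the elementary inequality $(A+B)\wedge C\leq(A\wedge C)+(B\wedge C)$ for (c), and Lemmas \ref{lemma3} and \ref{lemma4} for (d). Your explicit check that all quantities lie in $[0,1]$ before invoking Lemma \ref{lemma4} is a welcome addition that the paper leaves implicit.
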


The proof can be easily obtained from the definition of the operator and the
lemmas above.

\begin{remark}
\label{rem1}Kantorovich type max-min neural network operators are not
pseudo-linear in the max-min sense. Notice also that $K_{n}^{\left(  m\right)
}$ is not homogeneous, which means $K_{n}^{\left(  m\right)  }\left(
cf\right)  \neq cK_{n}^{\left(  m\right)  }\left(  f\right)  $ for some
measurable functions $f$ and constants $c>0.$
\end{remark}

For a fixed $\delta>0,$ $x\in\left[  a,b\right]  $ and $n\in\mathbb{N}$, we
define $B_{\delta,n}\left(  x\right)  $ such that%
\[
B_{\delta,n}\left(  x\right)  :=\left\{  k=\left\lceil na\right\rceil
,\left\lceil na\right\rceil +1,\ldots,\left\lfloor nb\right\rfloor
-1:\left\vert x-\frac{k}{n}\right\vert >\delta\right\}  \text{.}%
\]
Our approximation theorems now read as follows.

\begin{theorem}
\label{thm1}Let $f:\left[  a,b\right]  \rightarrow\left[  0,1\right]  $ be a
measurable function. Then we have
\[
\lim_{n\rightarrow\infty}K_{n}^{\left(  m\right)  }\left(  f;x\right)
=f\left(  x\right)
\]
at any continuity point $x\in\left[  a,b\right]  $ of $f.$ Furthermore, if
$f\in C\left(  \left[  a,b\right]  ,\left[  0,1\right]  \right)  ,$ we get%
\[
\lim_{n\rightarrow\infty}\left\Vert K_{n}^{\left(  m\right)  }\left(
f\right)  -f\right\Vert _{\infty}=0.
\]

\end{theorem}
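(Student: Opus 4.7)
The plan is to reduce convergence at a continuity point to estimating the operator applied to a nonnegative function vanishing at $x$, and then to split the index set in the max into a "near" and a "far" regime with respect to $x$.

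First, I would observe that $K_n^{(m)}$ reproduces constants in $[0,1]$: for $c\in[0,1]$, by Lemma \ref{lemma2}(2) there is an index $k^{\ast}$ in the valid range with $\phi_\sigma(nx-k^{\ast})=\bigvee_{d}\phi_\sigma(nx-d)$, so $c\wedge 1=c$ appears as a term of the max, while every other term is $\leq c$; hence $K_n^{(m)}(c;x)=c$. Combining this with Lemma \ref{lemma5}(d) applied with $g\equiv f(x)$ gives
\[
\lvert K_n^{(m)}(f;x)-f(x)\rvert \;\leq\; K_n^{(m)}(h;x), \qquad h(u):=\lvert f(u)-f(x)\rvert\in[0,1],
\]
reducing the task to showing $K_n^{(m)}(h;x)\to 0$.

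Next, I would fix $\varepsilon>0$ and, using continuity of $f$ at $x$, pick $\delta>0$ such that $h(u)<\varepsilon$ whenever $\lvert u-x\rvert<\delta$. For $n>2/\delta$, I would split the max defining $K_n^{(m)}(h;x)$ according to whether $k\notin B_{\delta/2,n}(x)$ (near) or $k\in B_{\delta/2,n}(x)$ (far). On the near part $\lvert x-k/n\rvert\leq\delta/2$, so $[k/n,(k+1)/n]\subset(x-\delta,x+\delta)$ and $n\int_{k/n}^{(k+1)/n}h(u)\,du<\varepsilon$, whence each term of the max is at most $\varepsilon$. On the far part I would bound the $\wedge$ by the ratio alone, then use Lemma \ref{lemma2} to bound the denominator from below by $\phi_\sigma(2)>0$ and Lemma \ref{lemma0} (applied at the argument $nx$) to bound the numerator by $O(n^{-(1+\alpha)})$ uniformly in $x$. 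The far contribution is therefore $O(n^{-(1+\alpha)})$.

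Combining, $\limsup_{n}K_n^{(m)}(h;x)\leq\varepsilon$, and since $\varepsilon$ is arbitrary this yields the pointwise statement. For the uniform assertion, $f\in C([a,b],[0,1])$ is uniformly continuous so $\delta$ can be chosen independently of $x$, and the decay in Lemma \ref{lemma0} is already uniform in $x$; hence both estimates are uniform and $\lVert K_n^{(m)}(f)-f\rVert_{\infty}\to 0$. The main subtlety, in my view, is the first reduction: because $K_n^{(m)}$ is neither linear, homogeneous, nor pseudo-linear (Remark \ref{rem1}), the usual linearity-based reduction is unavailable, and the essential replacement is the combination of constant preservation with the Lipschitz-type bound in Lemma \ref{lemma5}(d), after which the split into near and far indices proceeds exactly as in the classical positive-kernel setting.
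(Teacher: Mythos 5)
Your proposal is correct and follows essentially the same route as the paper: the reduction via constant preservation combined with Lemma \ref{lemma5}(d) is just a repackaging of the paper's add-and-subtract step using Lemmas \ref{lemma3} and \ref{lemma4}, and the subsequent near/far split over $B_{\delta/2,n}(x)$ with Lemma \ref{lemma2} and Lemma \ref{lemma0} matches the paper's treatment of $U_1$ and $U_2$ exactly. The uniform-continuity argument for the second assertion is likewise the same.
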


\begin{proof}
Let $x\in\left[  a,b\right]  $ be a continuity point of $f$. Adding and
subtracting some suitable terms, by the triangle inequality we get%
\begin{align*}
&  \left\vert K_{n}^{\left(  m\right)  }\left(  f;x\right)  -f\left(
x\right)  \right\vert \\
&  \leq\bigg|\bigvee\limits_{k=\left\lceil na\right\rceil }^{\left\lfloor
nb\right\rfloor -1}n\int\limits_{\frac{k}{n}}^{\frac{k+1}{n}}f\left(
u\right)  du\wedge\dfrac{\phi_{\sigma}\left(  nx-k\right)  }{\bigvee
\limits_{d=\left\lceil na\right\rceil }^{\left\lfloor nb\right\rfloor -1}%
\phi_{\sigma}\left(  nx-d\right)  }\\
&  -\bigvee\limits_{k=\left\lceil na\right\rceil }^{\left\lfloor
nb\right\rfloor -1}n\int\limits_{\frac{k}{n}}^{\frac{k+1}{n}}f\left(
x\right)  du\wedge\dfrac{\phi_{\sigma}\left(  nx-k\right)  }{\bigvee
\limits_{d=\left\lceil na\right\rceil }^{\left\lfloor nb\right\rfloor -1}%
\phi_{\sigma}\left(  nx-d\right)  }\bigg|\\
&  +\bigg|\bigvee\limits_{k=\left\lceil na\right\rceil }^{\left\lfloor
nb\right\rfloor -1}n\int\limits_{\frac{k}{n}}^{\frac{k+1}{n}}f\left(
x\right)  du\wedge\dfrac{\phi_{\sigma}\left(  nx-k\right)  }{\bigvee
\limits_{d=\left\lceil na\right\rceil }^{\left\lfloor nb\right\rfloor -1}%
\phi_{\sigma}\left(  nx-d\right)  }-f\left(  x\right)  \bigg|
\end{align*}
where%
\begin{align*}
&  \bigg|\bigvee\limits_{k=\left\lceil na\right\rceil }^{\left\lfloor
nb\right\rfloor -1}n\int\limits_{\frac{k}{n}}^{\frac{k+1}{n}}f\left(
x\right)  du\wedge\dfrac{\phi_{\sigma}\left(  nx-k\right)  }{\bigvee
\limits_{d=\left\lceil na\right\rceil }^{\left\lfloor nb\right\rfloor -1}%
\phi_{\sigma}\left(  nx-d\right)  }-f\left(  x\right)  \bigg|\\
&  =\bigg|\bigvee\limits_{k=\left\lceil na\right\rceil }^{\left\lfloor
nb\right\rfloor -1}f\left(  x\right)  \wedge\dfrac{\phi_{\sigma}\left(
nx-k\right)  }{\bigvee\limits_{d=\left\lceil na\right\rceil }^{\left\lfloor
nb\right\rfloor -1}\phi_{\sigma}\left(  nx-d\right)  }-f\left(  x\right)
\bigg|\\
&  =\bigg|f\left(  x\right)  \wedge\bigvee\limits_{k=\left\lceil
na\right\rceil }^{\left\lfloor nb\right\rfloor -1}\dfrac{\phi_{\sigma}\left(
nx-k\right)  }{\bigvee\limits_{d=\left\lceil na\right\rceil }^{\left\lfloor
nb\right\rfloor -1}\phi_{\sigma}\left(  nx-d\right)  }-f\left(  x\right)
\bigg|\\
&  =0.
\end{align*}
Then from Lemma \ref{lemma3} and Lemma \ref{lemma4}, we obtain
\begin{align}
&  \left\vert K_{n}^{\left(  m\right)  }\left(  f;x\right)  -f\left(
x\right)  \right\vert \nonumber\\
&  \leq\bigvee\limits_{k=\left\lceil na\right\rceil }^{\left\lfloor
nb\right\rfloor -1}\bigg|n\int\limits_{\frac{k}{n}}^{\frac{k+1}{n}}f\left(
u\right)  du\wedge\dfrac{\phi_{\sigma}\left(  nx-k\right)  }{\bigvee
\limits_{d=\left\lceil na\right\rceil }^{\left\lfloor nb\right\rfloor -1}%
\phi_{\sigma}\left(  nx-d\right)  }-n\int\limits_{\frac{k}{n}}^{\frac{k+1}{n}%
}f\left(  x\right)  du\wedge\dfrac{\phi_{\sigma}\left(  nx-k\right)  }%
{\bigvee\limits_{d=\left\lceil na\right\rceil }^{\left\lfloor nb\right\rfloor
-1}\phi_{\sigma}\left(  nx-d\right)  }\bigg|\nonumber\\
&  \leq\bigvee\limits_{k=\left\lceil na\right\rceil }^{\left\lfloor
nb\right\rfloor -1}n\int\limits_{\frac{k}{n}}^{\frac{k+1}{n}}\left\vert
f\left(  u\right)  -f\left(  x\right)  \right\vert du\wedge\dfrac{\phi
_{\sigma}\left(  nx-k\right)  }{\bigvee\limits_{d=\left\lceil na\right\rceil
}^{\left\lfloor nb\right\rfloor -1}\phi_{\sigma}\left(  nx-d\right)  }.
\label{4}%
\end{align}
Since $f$ is continuous at the point $x,$ then for all $\varepsilon>0,$ there
exists a $\delta=\delta\left(  x,\varepsilon\right)  >0$ such that
\[
\left\vert f\left(  y\right)  -f\left(  x\right)  \right\vert <\varepsilon
\]
whenever $y\in\left[  x-\delta,x+\delta\right]  .$ Partitioning the maximum
operation as follows%
\begin{align*}
\left\vert K_{n}^{\left(  m\right)  }\left(  f;x\right)  -f\left(  x\right)
\right\vert  &  \leq\bigvee\limits_{k\in B_{\frac{\delta}{2},n}\left(
x\right)  }n\int\limits_{\frac{k}{n}}^{\frac{k+1}{n}}\left\vert f\left(
u\right)  -f\left(  x\right)  \right\vert du\wedge\dfrac{\phi_{\sigma}\left(
nx-k\right)  }{\bigvee\limits_{d=\left\lceil na\right\rceil }^{\left\lfloor
nb\right\rfloor -1}\phi_{\sigma}\left(  nx-d\right)  }\\
&  \bigvee\bigvee\limits_{k\notin B_{\frac{\delta}{2},n}\left(  x\right)
}n\int\limits_{\frac{k}{n}}^{\frac{k+1}{n}}\left\vert f\left(  u\right)
-f\left(  x\right)  \right\vert du\wedge\dfrac{\phi_{\sigma}\left(
nx-k\right)  }{\bigvee\limits_{d=\left\lceil na\right\rceil }^{\left\lfloor
nb\right\rfloor -1}\phi_{\sigma}\left(  nx-d\right)  }\\
&  =:U_{1}\bigvee U_{2}\text{,}%
\end{align*}
(using the fact that $\left\vert u-x\right\vert \leq\left\vert u-\frac{k}%
{n}\right\vert +\left\vert \frac{k}{n}-x\right\vert \leq\frac{1}{n}%
+\frac{\delta}{2}\leq\delta$ for sufficiently large $n\in\mathbb{N}$ in
$U_{2}$) we obtain%
\begin{align*}
U_{2}  &  <\bigvee\limits_{k\notin B_{\frac{\delta}{2},n}\left(  x\right)
}\varepsilon\wedge\dfrac{\phi_{\sigma}\left(  nx-k\right)  }{\bigvee
\limits_{d=\left\lceil na\right\rceil }^{\left\lfloor nb\right\rfloor -1}%
\phi_{\sigma}\left(  nx-d\right)  }\\
&  \leq\varepsilon.
\end{align*}
On the other hand, since $\left\vert f\right\vert \leq1,$ from Lemma
\ref{lemma0}\ we get
\begin{align*}
U_{1}  &  \leq\bigvee\limits_{k\in B_{\frac{\delta}{2},n}\left(  x\right)
}1\wedge\dfrac{\phi_{\sigma}\left(  nx-k\right)  }{\bigvee
\limits_{d=\left\lceil na\right\rceil }^{\left\lfloor nb\right\rfloor -1}%
\phi_{\sigma}\left(  nx-d\right)  }\\
&  \leq\bigvee\limits_{\left\vert nx-k\right\vert >n\frac{\delta}{2}}%
\dfrac{\phi_{\sigma}\left(  nx-k\right)  }{\phi_{\sigma}\left(  2\right)  }\\
&  \leq\frac{K}{\phi_{\sigma}\left(  2\right)  }\frac{1}{n^{1+\alpha}}\\
&  <\varepsilon.
\end{align*}
for sufficiently large $n\in\mathbb{N}$, which gives the desired result.

For the second part of the theorem, if $f\in C\left(  \left[  a,b\right]
,\left[  0,1\right]  \right)  $, then using a similar argumentation lines, and
noting that $\delta=\delta\left(  \varepsilon\right)  \,$, one can easily
complete the proof.
\end{proof}

\begin{theorem}
\label{theorem2}Let $f\in C\left(  \left[  a,b\right]  ,\left[  0,1\right]
\right)  .$ Then%
\[
\lim_{n\rightarrow\infty}\left\Vert K_{n}^{\left(  m\right)  }\left(
f\right)  -f\right\Vert _{p}=0,
\]
where $\left\Vert \cdot\right\Vert _{p}$ denotes the $L^{p}$ norm on $\left[
a,b\right]  \ $\ for $1\leq p<\infty$.
\end{theorem}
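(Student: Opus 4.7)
The plan is to deduce the $L^p$-convergence directly from the uniform convergence already established in the second part of Theorem \ref{thm1}. Since $f \in C([a,b],[0,1])$, Theorem \ref{thm1} guarantees that $\|K_n^{(m)}(f)-f\|_\infty \to 0$ as $n\to\infty$. The key observation is that $[a,b]$ has finite Lebesgue measure, so uniform convergence on this bounded interval immediately implies $L^p$-convergence for every $1\le p<\infty$.

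Concretely, I would invoke the standard inequality
\[
\left\|K_n^{(m)}(f)-f\right\|_p \;\le\; (b-a)^{1/p}\,\left\|K_n^{(m)}(f)-f\right\|_\infty,
\]
which follows from the trivial pointwise bound $|K_n^{(m)}(f;x)-f(x)|\le \|K_n^{(m)}(f)-f\|_\infty$ together with integration over $[a,b]$ and taking $p$-th roots. Given any $\varepsilon>0$, the uniform convergence provides $n_0\in\mathbb{N}$ such that $\|K_n^{(m)}(f)-f\|_\infty < \varepsilon\,(b-a)^{-1/p}$ for all $n\ge n_0$, which then yields $\|K_n^{(m)}(f)-f\|_p<\varepsilon$ for all $n\ge n_0$.

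I do not expect any genuine obstacle here: both quantities appearing in the inequality are finite (the operator is well-defined and bounded by $1$ by the computation following \eqref{a}, and $f$ is continuous on a compact interval), and the whole argument is essentially a one-line consequence of the compactness of $[a,b]$. The substantive work has already been absorbed into Theorem \ref{thm1}; the present statement is best viewed as its natural $L^p$-counterpart for the continuous case, serving as a stepping stone to the more delicate $L^p$-approximation results for merely measurable $f$ that one would expect to appear next in the paper (presumably via a density argument combined with Lemma \ref{lemma5}(d) to control $K_n^{(m)}(|f-g|;\cdot)$ for a continuous approximant $g$ of $f$).
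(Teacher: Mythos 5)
Your argument is correct and coincides with the paper's own proof: both deduce the $L^p$-convergence from the uniform convergence in Theorem \ref{thm1} via the bound $\left\Vert K_{n}^{\left(m\right)}\left(f\right)-f\right\Vert _{p}\leq\left(b-a\right)^{1/p}\left\Vert K_{n}^{\left(m\right)}\left(f\right)-f\right\Vert _{\infty}$ on the compact interval $\left[a,b\right]$. No issues.
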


\begin{proof}
Since from the previous theorem%
\[
\left\Vert K_{n}^{\left(  m\right)  }\left(  f\right)  -f\right\Vert _{\infty
}<\varepsilon
\]
for sufficiently large $n\in\mathbb{N}$, then we get%

\begin{align*}
\left\Vert K_{n}^{\left(  m\right)  }\left(  f\right)  -f\right\Vert _{p}  &
=\biggl(\int\limits_{a}^{b}\left\vert K_{n}^{\left(  m\right)  }\left(
f;x\right)  -f\left(  x\right)  \right\vert ^{p}dx\biggl)^{\frac{1}{p}}\\
&  \leq\left\Vert K_{n}^{\left(  m\right)  }\left(  f\right)  -f\right\Vert
_{\infty}\left(  b-a\right)  ^{1/p}\\
&  <\varepsilon\left(  b-a\right)  ^{\frac{1}{p}}%
\end{align*}
for sufficiently large $n\in\mathbb{N}$, which completes the proof.
\end{proof}

\begin{theorem}
\label{theorem4}Let $f\in L^{p}\left(  \left[  a,b\right]  ,\left[
0,1\right]  \right)  $ for $1\leq p<\infty$. Then we have%
\[
\lim_{n\rightarrow\infty}\left\Vert K_{n}^{\left(  m\right)  }\left(
f\right)  -f\right\Vert _{p}=0\text{.}%
\]

\end{theorem}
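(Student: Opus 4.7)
The strategy is to extend Theorem \ref{theorem2} from $C([a,b],[0,1])$ to $L^p([a,b],[0,1])$ by a density argument adapted to the sublinearity of $K_n^{(m)}$. Given $\varepsilon>0$, I would first pick $g\in C([a,b],[0,1])$ with $\|f-g\|_p<\varepsilon$; such a $g$ exists by density of $C([a,b])$ in $L^p([a,b])$, followed by truncation against $0$ and $1$, which (because $f$ already takes values in $[0,1]$) does not increase the distance. The triangle inequality together with property (d) of Lemma \ref{lemma5} then gives
\begin{equation*}
\|K_n^{(m)}(f)-f\|_p\leq\|K_n^{(m)}(|f-g|)\|_p+\|K_n^{(m)}(g)-g\|_p+\|g-f\|_p.
\end{equation*}
Theorem \ref{theorem2} drives the middle term to zero and the last term is $<\varepsilon$ by construction, so the problem reduces to a uniform-in-$n$ control of $\|K_n^{(m)}(h)\|_p$ for $h:=|f-g|\in L^p([a,b],[0,1])$.

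For that key estimate I would prove that there exist $\gamma>0$ and $C>0$, independent of $n$, with $\|K_n^{(m)}(h)\|_p\leq C\|h\|_p^{\gamma}$ for every measurable $h:[a,b]\to[0,1]$. Write $h_k:=n\int_{k/n}^{(k+1)/n}h(u)\,du\in[0,1]$ and $r_k(x):=\phi_\sigma(nx-k)/\bigvee_d\phi_\sigma(nx-d)\in[0,1]$. The central tool is the elementary inequality $(a\wedge b)^p\leq a^{p\theta}b^{p(1-\theta)}$ for $a,b\geq0$ and $\theta\in[0,1]$. Combined with Lemma \ref{lemma6} and $\bigvee\leq\sum$, it yields
\begin{equation*}
\|K_n^{(m)}(h)\|_p^p\leq\sum_k h_k^{p\theta}\int_a^b r_k(x)^{p(1-\theta)}\,dx.
\end{equation*}
The inner integral is controlled by the substitution $y=nx-k$, the lower bound $\bigvee_d\phi_\sigma(nx-d)\geq\phi_\sigma(2)>0$ from Lemma \ref{lemma2}, and the $|y|^{-(1+\alpha)}$ tail from Lemma \ref{lemma1}(4); choosing $\theta>0$ with $p(1-\theta)(1+\alpha)>1$ makes $\int_\mathbb{R}\phi_\sigma(y)^{p(1-\theta)}\,dy$ finite and produces $\int_a^b r_k^{p(1-\theta)}\,dx\leq C'/n$. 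The sum $\sum_k h_k^{p\theta}$ is then bounded by $C''n\|h\|_1^{p\theta}$ via discrete H\"older (using $\sum_k h_k\leq n\|h\|_1$ and that the number of indices is at most $n(b-a)+1$), provided $\theta$ is additionally chosen with $p\theta\leq 1$. Both constraints are simultaneously satisfiable for some $\theta>0$ since $\alpha>0$ and $p\geq 1$. Combining with $\|h\|_1\leq(b-a)^{1-1/p}\|h\|_p$ gives $\|K_n^{(m)}(h)\|_p\leq C\|h\|_p^{\theta}$, that is $\gamma=\theta$.

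Plugging $\|h\|_p<\varepsilon$ back into the decomposition, the first term is at most $C\varepsilon^{\theta}$, so $\|K_n^{(m)}(f)-f\|_p\to 0$ as $\varepsilon\to 0$. The hardest step is the uniform-in-$n$ estimate in the second paragraph: because $K_n^{(m)}$ is neither linear nor homogeneous, standard $L^p$-contractivity arguments are unavailable, and the bound must simultaneously exploit the min-truncation inside the operator and the algebraic decay of $\phi_\sigma$ supplied by condition $(\Sigma3)$. The splitting $(a\wedge b)^p\leq a^{p\theta}b^{p(1-\theta)}$, with $\theta$ chosen strictly inside the admissible range, is precisely the hybrid device that marries these two features; if one tried the cruder bounds $a\wedge b\leq a$ or $a\wedge b\leq b$ alone, the resulting sum would diverge in $n$ and destroy the argument.
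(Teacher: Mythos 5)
Your proof is correct, and while its skeleton matches the paper's --- density of $C([a,b],[0,1])$ in $L^p$, the triangle inequality combined with Lemma \ref{lemma5}(d) and Theorem \ref{theorem2}, and then a uniform-in-$n$ bound of the form $\|K_n^{(m)}(|f-g|)\|_p\le C\|f-g\|_p^{\gamma}$ with $\gamma>0$ --- the way you establish that key bound is genuinely different. The paper keeps the min intact: after Jensen's inequality and the substitution $y=nx-k$ it reduces everything to $\bigl(\int_{\mathbb{R}}\frac{\phi_\sigma(y)}{\phi_\sigma(2)}\wedge\|f-g\|_p^p\,dy\bigr)^{1/p}$ and then splits $\mathbb{R}$ at the $\|f-g\|_p$-dependent threshold $\tilde N=\|f-g\|_p^{-p/(1+\alpha)}$, using the $|y|^{-(1+\alpha)}$ tail outside and the constant inside; this yields the explicit exponent $\gamma=\alpha/(1+\alpha)$ for every $p$, recorded as (\ref{6}) and reused verbatim in the $K$-functional estimate of Theorem \ref{thm6}. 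You instead break the min with $(a\wedge b)^p\le a^{p\theta}b^{p(1-\theta)}$, replace the max by a sum, and balance the kernel-integrability constraint $p(1-\theta)(1+\alpha)>1$ against the discrete H\"older constraint $p\theta\le 1$; these are indeed simultaneously satisfiable for some $\theta>0$, the factor-of-$n$ bookkeeping ($\int_a^b r_k^{p(1-\theta)}\,dx\le C'/n$ against $\sum_k h_k^{p\theta}\le C'' n\|h\|_1^{p\theta}$) checks out, and you land at $\gamma=\theta$. What you lose is sharpness: your admissible $\theta$ is capped by $1/p$ and by $1-\frac{1}{p(1+\alpha)}$ (already for $p=1$ this forces $\theta<\alpha/(1+\alpha)$ strictly), so your exponent is always weaker than the paper's and your route would not deliver the constant and exponent that Theorem \ref{thm6} later relies on. What you gain is a more mechanical, Schur-test-flavoured argument that avoids the data-dependent splitting of the integration domain and makes transparent why the min-truncation and the decay hypothesis $(\Sigma3)$ must both be used.
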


\begin{proof}
Let $f\in L^{p}\left(  \left[  a,b\right]  ,\left[  0,1\right]  \right)  $ for
$1\leq p<\infty.$ Since $C\left(  \left[  a,b\right]  ,\left[  0,1\right]
\right)  $ is dense in $L^{p}\left(  \left[  a,b\right]  ,\left[  0,1\right]
\right)  ,$ then for all $\varepsilon>0,$ there exists a $g\in C\left(
\left[  a,b\right]  ,\left[  0,1\right]  \right)  $ such that%
\[
\left\Vert f-g\right\Vert _{p}<\varepsilon.
\]
Now, we know that%
\begin{align}
\left\Vert K_{n}^{\left(  m\right)  }\left(  f\right)  -f\right\Vert _{p}  &
\leq\left\Vert K_{n}^{\left(  m\right)  }\left(  f\right)  -K_{n}^{\left(
m\right)  }\left(  g\right)  \right\Vert _{p}+\left\Vert K_{n}^{\left(
m\right)  }\left(  g\right)  -g\right\Vert _{p}+\left\Vert g-f\right\Vert
_{p}\label{5}\\
&  <\left\Vert K_{n}^{\left(  m\right)  }\left(  f\right)  -K_{n}^{\left(
m\right)  }\left(  g\right)  \right\Vert _{p}+\left\Vert K_{n}^{\left(
m\right)  }\left(  g\right)  -g\right\Vert _{p}+\varepsilon.\nonumber
\end{align}
By Theorem \ref{theorem2}, we have
\[
\left\Vert K_{n}^{\left(  m\right)  }\left(  g\right)  -g\right\Vert
_{p}<\varepsilon
\]
for sufficiently large $n\in\mathbb{N}.$ On the other hand, from Lemma
\ref{lemma5} (d), we know that
\begin{align*}
&  \left\Vert K_{n}^{\left(  m\right)  }\left(  f\right)  -K_{n}^{\left(
m\right)  }\left(  g\right)  \right\Vert _{p}\\
&  =\biggl(\int\limits_{a}^{b}\left\vert K_{n}^{\left(  m\right)  }\left(
f;x\right)  -K_{n}^{\left(  m\right)  }\left(  g;x\right)  \right\vert
^{p}dx\biggl)^{\frac{1}{p}}\\
&  \leq\biggl(\int\limits_{a}^{b}\biggl[\bigvee\limits_{k=\left\lceil
na\right\rceil }^{\left\lfloor nb\right\rfloor -1}n\int\limits_{\frac{k}{n}%
}^{\frac{k+1}{n}}\left\vert f\left(  u\right)  -g\left(  u\right)  \right\vert
du\wedge\dfrac{\phi_{\sigma}\left(  nx-k\right)  }{\bigvee
\limits_{d=\left\lceil na\right\rceil }^{\left\lfloor nb\right\rfloor -1}%
\phi_{\sigma}\left(  nx-d\right)  }\biggr]^{p}dx\biggl)^{\frac{1}{p}}%
\end{align*}
holds true. Further, from Lemma \ref{lemma6}, we obtain%
\begin{align*}
&  \left\Vert K_{n}^{\left(  m\right)  }\left(  f\right)  -K_{n}^{\left(
m\right)  }\left(  g\right)  \right\Vert _{p}\\
&  \leq\biggl(\int\limits_{a}^{b}\bigvee\limits_{k=\left\lceil na\right\rceil
}^{\left\lfloor nb\right\rfloor -1}\biggl(n\int\limits_{\frac{k}{n}}%
^{\frac{k+1}{n}}\left\vert f\left(  u\right)  -g\left(  u\right)  \right\vert
du\biggl)^{p}\wedge\biggl(\dfrac{\phi_{\sigma}\left(  nx-k\right)  }%
{\bigvee\limits_{d=\left\lceil na\right\rceil }^{\left\lfloor nb\right\rfloor
-1}\phi_{\sigma}\left(  nx-d\right)  }\biggl)^{p}dx\biggl)^{\frac{1}{p}}.
\end{align*}
Now, considering
\[
\dfrac{\phi_{\sigma}\left(  nx-k\right)  }{\bigvee\limits_{d=\left\lceil
na\right\rceil }^{\left\lfloor nb\right\rfloor -1}\phi_{\sigma}\left(
nx-d\right)  }\leq 1\text{,}%
\]
then by the convexity of $\left\vert \cdot\right\vert ^{p}$ and Jensen's
inequality, we have%
\begin{align*}
&  \left\Vert K_{n}^{\left(  m\right)  }\left(  f\right)  -K_{n}^{\left(
m\right)  }\left(  g\right)  \right\Vert _{p}\\
&  \leq\biggl(\int\limits_{a}^{b}\bigvee\limits_{k=\left\lceil na\right\rceil
}^{\left\lfloor nb\right\rfloor -1}n\int\limits_{\frac{k}{n}}^{\frac{k+1}{n}%
}\left\vert f\left(  u\right)  -g\left(  u\right)  \right\vert ^{p}%
du\wedge\dfrac{\phi_{\sigma}\left(  nx-k\right)  }{\bigvee
\limits_{d=\left\lceil na\right\rceil }^{\left\lfloor nb\right\rfloor -1}%
\phi_{\sigma}\left(  nx-d\right)  }dx\biggl)^{\frac{1}{p}}\\
&  =\biggl(\int\limits_{a}^{b}n\bigvee\limits_{k=\left\lceil na\right\rceil
}^{\left\lfloor nb\right\rfloor -1}\int\limits_{\frac{k}{n}}^{\frac{k+1}{n}%
}\left\vert f\left(  u\right)  -g\left(  u\right)  \right\vert ^{p}%
du\wedge\dfrac{\phi_{\sigma}\left(  nx-k\right)  }{n\bigvee
\limits_{d=\left\lceil na\right\rceil }^{\left\lfloor nb\right\rfloor -1}%
\phi_{\sigma}\left(  nx-d\right)  }dx\biggl)^{\frac{1}{p}}\\
&  \leq\biggl(\int\limits_{a}^{b}n\bigvee\limits_{k=\left\lceil na\right\rceil
}^{\left\lfloor nb\right\rfloor -1}\int\limits_{\frac{k}{n}}^{\frac{k+1}{n}%
}\left\vert f\left(  u\right)  -g\left(  u\right)  \right\vert ^{p}%
du\wedge\dfrac{\phi_{\sigma}\left(  nx-k\right)  }{\bigvee
\limits_{d=\left\lceil na\right\rceil }^{\left\lfloor nb\right\rfloor -1}%
\phi_{\sigma}\left(  nx-d\right)  }dx\biggl)^{\frac{1}{p}}\\
&  \leq\biggl(\int\limits_{\mathbb{R}}n\bigvee\limits_{k=\left\lceil
na\right\rceil }^{\left\lfloor nb\right\rfloor -1}\int\limits_{\frac{k}{n}%
}^{\frac{k+1}{n}}\left\vert f\left(  u\right)  -g\left(  u\right)  \right\vert
^{p}du\wedge\dfrac{\phi_{\sigma}\left(  nx-k\right)  }{\phi_{\sigma}\left(
2\right)  }dx\biggl)^{\frac{1}{p}}.
\end{align*}
After substituting $nx-k=y$, we see that%
\begin{align*}
\left\Vert K_{n}^{\left(  m\right)  }\left(  f\right)  -K_{n}^{\left(
m\right)  }\left(  g\right)  \right\Vert _{p}  &  \leq\biggl(\int
\limits_{\mathbb{R}}\biggl\{\bigvee\limits_{k=\left\lceil na\right\rceil
}^{\left\lfloor nb\right\rfloor -1}\int\limits_{\frac{k}{n}}^{\frac{k+1}{n}%
}\left\vert f\left(  u\right)  -g\left(  u\right)  \right\vert ^{p}%
du\wedge\dfrac{\phi_{\sigma}\left(  y\right)  }{\phi_{\sigma}\left(  2\right)
}\biggl\}dy\biggl)^{\frac{1}{p}}\\
&  =\biggl(\int\limits_{\mathbb{R}}\biggl\{\dfrac{\phi_{\sigma}\left(
y\right)  }{\phi_{\sigma}\left(  2\right)  }\wedge\bigvee
\limits_{k=\left\lceil na\right\rceil }^{\left\lfloor nb\right\rfloor
-1}\int\limits_{\frac{k}{n}}^{\frac{k+1}{n}}\left\vert f\left(
u\right)  -g\left(  u\right)  \right\vert ^{p}%
du\biggl\}dy\biggl)^{\frac{1}{p}}\\
&  \leq\biggl(\int\limits_{\mathbb{R}}\biggl\{\dfrac{\phi_{\sigma}\left(
y\right)  }{\phi_{\sigma}\left(  2\right)  }\wedge\sum\limits_{k=\left\lceil
na\right\rceil }^{\left\lfloor nb\right\rfloor -1}\int\limits_{\frac
{k}{n}}^{\frac{k+1}{n}}\left\vert f\left(  u\right)  -g\left(  u\right)
\right\vert ^{p}du\biggl\}dy\biggl)^{\frac{1}{p}}\\
&  \leq\biggl(\int\limits_{\mathbb{R}}\dfrac{\phi_{\sigma}\left(  y\right)
}{\phi_{\sigma}\left(  2\right)  }\wedge\left\Vert f-g\right\Vert _{p}%
^{p}dy\biggl)^{\frac{1}{p}}.
\end{align*}
Now, since $\phi_{\sigma}\in L^{1}\left(  \mathbb{R}\right)  $ and
$\phi_{\sigma}\left(  y\right)  =O\left(  \left\vert y\right\vert ^{-\left(
1+\alpha\right)  }\right)  $ for some $\alpha>0$, there exists $M,N>0$ (which
are independent from each other) such that
\[
\phi_{\sigma}\left(  y\right)  \leq\frac{M}{\left\vert y\right\vert
^{1+\alpha}}%
\]
for $\left\vert y\right\vert \geq N.$ Assume that $\left\Vert f-g\right\Vert
_{p}$ is sufficiently small such that
\[
\tilde{N}:=\frac{1}{\left\Vert f-g\right\Vert _{p}^{\frac{p}{1+\alpha}}}\geq
N,
\]
then, by the following separation of the integral, we get%
\begin{subequations}
\begin{align*}
&  \left\Vert K_{n}^{\left(  m\right)  }\left(  f\right)  -K_{n}^{\left(
m\right)  }\left(  g\right)  \right\Vert _{p}\\
&  \leq\biggl(\int\limits_{\left\vert y\right\vert >\tilde{N}}\dfrac
{\phi_{\sigma}\left(  y\right)  }{\phi_{\sigma}\left(  2\right)  }%
\wedge\left\Vert f-g\right\Vert _{p}^{p}dy+\int\limits_{\left\vert
y\right\vert \leq\tilde{N}}\dfrac{\phi_{\sigma}\left(  y\right)  }%
{\phi_{\sigma}\left(  2\right)  }\wedge\left\Vert f-g\right\Vert _{p}%
^{p}dy\biggl)^{\frac{1}{p}}\\
&  \leq\biggl(\int\limits_{\left\vert y\right\vert >\tilde{N}}\dfrac
{\phi_{\sigma}\left(  y\right)  }{\phi_{\sigma}\left(  2\right)  }%
dy+\int\limits_{\left\vert y\right\vert \leq\tilde{N}}\left\Vert
f-g\right\Vert _{p}^{p}dy\biggl)^{\frac{1}{p}}\\
&  \leq\left(  \frac{M}{\phi_{\sigma}\left(  2\right)  }\int
\limits_{\left\vert y\right\vert >\tilde{N}}\frac{1}{\left\vert y\right\vert
^{1+\alpha}}dy+2\tilde{N}\left\Vert f-g\right\Vert _{p}^{p}\right)  ^{\frac
{1}{p}}.
\end{align*}
Since $\phi_{\sigma}$ is even, we can conclude that
\end{subequations}
\begin{align}
\left\Vert K_{n}^{\left(  m\right)  }\left(  f\right)  -K_{n}^{\left(
m\right)  }\left(  g\right)  \right\Vert _{p}  &  \leq\left(  \frac{2M}%
{\phi_{\sigma}\left(  2\right)  }\int\limits_{\tilde{N}}^{\infty}\frac
{1}{y^{1+\alpha}}dy+\frac{2}{\left\Vert f-g\right\Vert _{p}^{\frac{p}%
{1+\alpha}}}\left\Vert f-g\right\Vert _{p}^{p}\right)  ^{\frac{1}{p}%
}\nonumber\\
&  =\left(  \frac{2M}{\alpha\phi_{\sigma}\left(  2\right)  }\left\Vert
f-g\right\Vert _{p}^{\frac{\alpha p}{1+\alpha}}+2\left\Vert f-g\right\Vert
_{p}^{p-\frac{p}{1+\alpha}}\right)  ^{\frac{1}{p}}\nonumber\\
&  =\left(  \left\{  \frac{2M}{\alpha\phi_{\sigma}\left(  2\right)
}+2\right\}  \left\Vert f-g\right\Vert _{p}^{\frac{\alpha p}{1+\alpha}%
}\right)  ^{\frac{1}{p}}\nonumber\\
&  =\left\{  \frac{2M}{\alpha\phi_{\sigma}\left(  2\right)  }+2\right\}
^{\frac{1}{p}}\left\Vert f-g\right\Vert _{p}^{\frac{\alpha}{1+\alpha}%
}\label{6}\\
&  <\left\{  \frac{2M}{\alpha\phi_{\sigma}\left(  2\right)  }+2\right\}
^{\frac{1}{p}}\varepsilon^{\frac{\alpha}{1+\alpha}}\nonumber
\end{align}
holds. Therefore, by the arbitrariness of $\varepsilon$, the proof is complete.
\end{proof}

\begin{remark}
\label{rem3} Although we have shown the $L^{p}$-approximation only for
functions $f\in L^{p}\left(  \left[  a,b\right]  ,\left[  0,1\right]  \right)
$, it is possible to extend the results for functions $f\in L^{p}\left(
\left[  a,b\right]  ,\mathbb{R}\right)  $ by extending the definition of the
operator $K_{n}^{\left(  m\right)  }$ with respect to the range of the
functions in $L^{p}\left(  \left[  a,b\right]  ,\mathbb{R}\right)  $ ({see
Remark 3.1 in \cite{is}, see also \cite{gokcer2}}).
\end{remark}

\section{Estimates for the Kantorovich type max-min neural network operators}

Let $f:\left[  a,b\right]  \rightarrow\left[  0,1\right]  $ be given. Then for
a $\delta>0,$ the modulus of continuity of $f$ on $\left[  a,b\right]  $ is
defined by%
\[
\omega_{\left[  a,b\right]  }\left(  f,\delta\right)  :=\sup_{\left\vert
x-y\right\vert \leq\delta}\left\{  \left\vert f\left(  x\right)  -f\left(
y\right)  \right\vert :x,y\in\left[  a,b\right]  \right\}  \text{.}%
\]

We also need the definition of generalized absolute moment of order $\beta>0$,
introduced in \cite{cost2} such that for a given $\phi_{\sigma}$, it is
defined as follows,%
\[
m_{\beta}\left(  \phi_{\sigma}\right)  :=\sup_{x\in\mathbb{R}}\left\{
\bigvee\limits_{k\in\mathbb{Z}}\phi_{\sigma}\left(  x-k\right)  \left\vert
x-k\right\vert ^{\beta}\right\}  \text{.}%
\]

\begin{lemma}
\label{lemf} (see \cite{cost2}) If $0<\beta\leq1+\alpha$, then $m_{\beta}\left(  \phi_{\sigma
}\right)  <\infty$.
\end{lemma}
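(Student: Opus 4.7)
The plan is to bound the quantity $\phi_\sigma(x-k)|x-k|^\beta$ uniformly in both $x$ and $k$, which immediately bounds the supremum defining $m_\beta(\phi_\sigma)$. The main tool is to split according to whether $|x-k|$ is large or small, using the polynomial decay estimate from Lemma \ref{lemma1} (4) in the former case and the boundedness of $\phi_\sigma$ (by $1/2$) in the latter case.

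More precisely, I would fix the constants $M,L>0$ from Lemma \ref{lemma1} (4), so that $\phi_\sigma(y)\le M|y|^{-(1+\alpha)}$ whenever $|y|>L$. For arbitrary $x\in\mathbb{R}$ and $k\in\mathbb{Z}$ I would distinguish two cases. If $|x-k|\le L$, then since $\phi_\sigma(x-k)\le 1/2$, one has
\[
\phi_\sigma(x-k)|x-k|^\beta \le \tfrac{1}{2}L^\beta.
\]
If instead $|x-k|>L$, the decay estimate gives
\[
\phi_\sigma(x-k)|x-k|^\beta \le M\,|x-k|^{\beta-(1+\alpha)}.
\]
Here the hypothesis $\beta\le 1+\alpha$ enters: the exponent $\beta-(1+\alpha)$ is nonpositive, so the right-hand side is a nonincreasing function of $|x-k|$ on $(L,\infty)$, and hence is bounded above by $M\max\{1,L^{\beta-(1+\alpha)}\}$.

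Combining the two cases yields a constant $C=C(\alpha,\beta,M,L)$ such that
\[
\phi_\sigma(x-k)|x-k|^\beta \le C
\]
for every $x\in\mathbb{R}$ and every $k\in\mathbb{Z}$. Taking first the supremum over $k\in\mathbb{Z}$ and then over $x\in\mathbb{R}$ gives $m_\beta(\phi_\sigma)\le C<\infty$, which is the claim. There is no real obstacle here; the only mild subtlety is noticing that the assumption $\beta\le 1+\alpha$ is used precisely to guarantee that $|x-k|^{\beta-(1+\alpha)}$ remains bounded for large $|x-k|$, which would fail for $\beta>1+\alpha$.
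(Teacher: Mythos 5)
Your proof is correct, and it is essentially the standard argument from the cited reference \cite{cost2} (the paper itself only cites that source and gives no proof): since $m_\beta(\phi_\sigma)$ is a supremum over $k$ rather than a sum, a uniform pointwise bound on $\phi_\sigma(x-k)|x-k|^\beta$ suffices, and your two-case split using $\phi_\sigma\le 1/2$ for $|x-k|\le L$ and the decay estimate of Lemma \ref{lemma1}~(4) for $|x-k|>L$ delivers exactly that. You also correctly identify where the hypothesis $\beta\le 1+\alpha$ is used, namely to keep the exponent $\beta-(1+\alpha)$ nonpositive so the tail bound stays bounded.
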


Now, we investigate the rate of approximation for Theorem \ref{thm1}.

\begin{theorem}
\label{thm5}Let $f\in C\left(  \left[  a,b\right]  ,\left[  0,1\right]
\right)  $ and $\delta_{n}$ be a null sequence of positive real numbers being
$\left(  n\delta_{n}\right)  ^{-1}$ a null sequence. Then, we have%
\[
\left\Vert K_{n}^{\left(  m\right)  }\left(  f\right)  -f\right\Vert _{\infty
}=\omega_{\left[  a,b\right]  }\left(  f,n^{-1}\right)  +\omega_{\left[
a,b\right]  }\left(  f,\delta_{n}\right)
%TCIMACRO{\dbigvee }%
%BeginExpansion
{\displaystyle\bigvee}
%EndExpansion
\frac{m_{\left(  1+\alpha\right)  }\left(  \phi_{\sigma}\right)  }%
{\phi_{\sigma}\left(  2\right)  }\left(  n\delta_{n}\right)  ^{-\left(
1+\alpha\right)  }\text{.}%
\]

\end{theorem}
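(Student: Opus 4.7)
The plan is to sharpen the pointwise bound (4) obtained inside the proof of Theorem \ref{thm1}, namely
\[
|K_n^{(m)}(f;x)-f(x)|\leq \bigvee_{k=\lceil na\rceil}^{\lfloor nb\rfloor-1} n\int_{k/n}^{(k+1)/n}|f(u)-f(x)|\,du \wedge \frac{\phi_\sigma(nx-k)}{\bigvee_{d=\lceil na\rceil}^{\lfloor nb\rfloor-1}\phi_\sigma(nx-d)},
\]
by inserting the modulus of continuity. For $u\in[k/n,(k+1)/n]$ one has $|u-k/n|\leq 1/n$, hence $|f(u)-f(x)|\leq \omega_{[a,b]}(f,n^{-1})+|f(k/n)-f(x)|$, and averaging preserves the same bound for $n\int_{k/n}^{(k+1)/n}|f(u)-f(x)|\,du$.

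I then apply the elementary inequality $(A+B)\wedge C\leq A+(B\wedge C)$, valid whenever $A\geq 0$ (verified by a short case split on the sign of $A+B-C$), together with the subadditivity $\bigvee_k(a_k+b_k)\leq \bigvee_k a_k+\bigvee_k b_k$ and the fact that a constant passes through the join. These steps pull $\omega_{[a,b]}(f,n^{-1})$ outside the join and leave
\[
|K_n^{(m)}(f;x)-f(x)|\leq \omega_{[a,b]}(f,n^{-1})+\bigvee_{k=\lceil na\rceil}^{\lfloor nb\rfloor-1}\left[|f(k/n)-f(x)|\wedge \frac{\phi_\sigma(nx-k)}{\bigvee_{d=\lceil na\rceil}^{\lfloor nb\rfloor-1}\phi_\sigma(nx-d)}\right].
\]
Next I decompose the remaining join along the partition $\{k\notin B_{\delta_n,n}(x)\}\cup\{k\in B_{\delta_n,n}(x)\}$ as was done for $U_1\vee U_2$ in the proof of Theorem \ref{thm1}, but now with the quantitative $\delta_n$ playing the role of $\delta/2$.

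On $\{k\notin B_{\delta_n,n}(x)\}$, the inequality $|k/n-x|\leq \delta_n$ yields $|f(k/n)-f(x)|\leq \omega_{[a,b]}(f,\delta_n)$, so the corresponding $\wedge$-term is dominated by this constant. On $\{k\in B_{\delta_n,n}(x)\}$, one has $|nx-k|>n\delta_n$, and combining Lemma \ref{lemma2} with the definition of $m_{1+\alpha}(\phi_\sigma)$ (finite by Lemma \ref{lemf}) gives
\[
\frac{\phi_\sigma(nx-k)}{\bigvee_{d=\lceil na\rceil}^{\lfloor nb\rfloor-1}\phi_\sigma(nx-d)}\leq \frac{m_{1+\alpha}(\phi_\sigma)}{\phi_\sigma(2)(n\delta_n)^{1+\alpha}}.
\]
Taking the join of the two sub-bounds and then the supremum over $x\in[a,b]$ produces the stated inequality. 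The main obstacle is carefully tracking the interaction of $+$, $\wedge$, and $\bigvee$, because these operations do not distribute as in a linear setting; once the key inequality $(A+B)\wedge C\leq A+(B\wedge C)$ and the extraction of the constant $\omega_{[a,b]}(f,n^{-1})$ from the join are justified, the remaining steps reduce to direct applications of the lemmas already recorded.
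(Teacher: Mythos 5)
Your proposal is correct and follows essentially the same route as the paper: the paper also starts from the bound (4), splits $|f(u)-f(x)|\le|f(u)-f(k/n)|+|f(k/n)-f(x)|$ and uses the distributive-type inequality $(a+b)\wedge c\le a\wedge c+b\wedge c$ to extract the $\omega_{[a,b]}(f,n^{-1})$ term, then partitions the remaining join over $B_{\delta_n,n}(x)$ and its complement exactly as you do, with the same use of Lemma \ref{lemma2} and $m_{1+\alpha}(\phi_\sigma)$. Your variant inequality $(A+B)\wedge C\le A+(B\wedge C)$ is an equally valid way to pull the constant out and changes nothing of substance.
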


\begin{proof}
From (\ref{4}), we \ know that%
\[
\left\vert K_{n}^{\left(  m\right)  }\left(  f;x\right)  -f\left(  x\right)
\right\vert \leq\bigvee\limits_{k=\left\lceil na\right\rceil }^{\left\lfloor
nb\right\rfloor -1}n\int\limits_{\frac{k}{n}}^{\frac{k+1}{n}}\left\vert
f\left(  u\right)  -f\left(  x\right)  \right\vert du\wedge\dfrac{\phi
_{\sigma}\left(  nx-k\right)  }{\bigvee\limits_{d=\left\lceil na\right\rceil
}^{\left\lfloor nb\right\rfloor -1}\phi_{\sigma}\left(  nx-d\right)  }%
\]
holds. Since $\left\vert f\left(  u\right)  -f\left(  x\right)  \right\vert
\leq\left\vert f\left(  u\right)  -f\left(  \frac{k}{n}\right)  \right\vert
+\left\vert f\left(  \frac{k}{n}\right)  -f\left(  x\right)  \right\vert $ and
$\left(  a+b\right)  \wedge c\leq a\wedge c+b\wedge c$ for all $a,b,c\geq0$,
the following inequality holds true%
\begin{align}
\left\vert K_{n}^{\left(  m\right)  }\left(  f;x\right)  -f\left(  x\right)
\right\vert  &  \leq\bigvee\limits_{k=\left\lceil na\right\rceil
}^{\left\lfloor nb\right\rfloor -1}n\int\limits_{\frac{k}{n}}^{\frac{k+1}{n}%
}\left\vert f\left(  u\right)  -f\left(  \frac{k}{n}\right)  \right\vert
du\wedge\dfrac{\phi_{\sigma}\left(  nx-k\right)  }{\bigvee
\limits_{d=\left\lceil na\right\rceil }^{\left\lfloor nb\right\rfloor -1}%
\phi_{\sigma}\left(  nx-d\right)  }\nonumber\\
&  +\bigvee\limits_{k=\left\lceil na\right\rceil }^{\left\lfloor
nb\right\rfloor -1}n\int\limits_{\frac{k}{n}}^{\frac{k+1}{n}}\left\vert
f\left(  \frac{k}{n}\right)  -f\left(  x\right)  \right\vert du\wedge
\dfrac{\phi_{\sigma}\left(  nx-k\right)  }{\bigvee\limits_{d=\left\lceil
na\right\rceil }^{\left\lfloor nb\right\rfloor -1}\phi_{\sigma}\left(
nx-d\right)  }\label{55}\\
&  =:I_{1}+I_{2}\text{,}\nonumber
\end{align}
where the integral in $I_{2}$ does not depend on $u$ and hence
\[
I_{2}=\bigvee\limits_{k=\left\lceil na\right\rceil }^{\left\lfloor
nb\right\rfloor -1}\left\vert f\left(  \frac{k}{n}\right)  -f\left(  x\right)
\right\vert \wedge\dfrac{\phi_{\sigma}\left(  nx-k\right)  }{\bigvee
\limits_{d=\left\lceil na\right\rceil }^{\left\lfloor nb\right\rfloor -1}%
\phi_{\sigma}\left(  nx-d\right)  }.
\]
If we divide $I_{2}$ as given below%
\begin{align*}
I_{2}  &  =\bigvee\limits_{k\in B_{\delta_{n},n}\left(  x\right)  }\left\vert
f\left(  \frac{k}{n}\right)  -f\left(  x\right)  \right\vert \wedge\dfrac
{\phi_{\sigma}\left(  nx-k\right)  }{\bigvee\limits_{d=\left\lceil
na\right\rceil }^{\left\lfloor nb\right\rfloor -1}\phi_{\sigma}\left(
nx-d\right)  }\\
&  \bigvee\bigvee\limits_{k\notin B_{\delta_{n},n}\left(  x\right)
}\left\vert f\left(  \frac{k}{n}\right)  -f\left(  x\right)  \right\vert
\wedge\dfrac{\phi_{\sigma}\left(  nx-k\right)  }{\bigvee\limits_{d=\left\lceil
na\right\rceil }^{\left\lfloor nb\right\rfloor -1}\phi_{\sigma}\left(
nx-d\right)  }\\
&  =:I_{2}^{1}%
%TCIMACRO{\tbigvee }%
%BeginExpansion
{\textstyle\bigvee}
%EndExpansion
I_{2}^{2}%
\end{align*}
then%
\begin{align*}
I_{2}^{2}  &  \leq\bigvee\limits_{k\notin B_{\delta_{n},n}\left(  x\right)
}\omega_{\left[  a,b\right]  }\left(  f,\delta_{n}\right)  \wedge\dfrac
{\phi_{\sigma}\left(  nx-k\right)  }{\bigvee\limits_{d=\left\lceil
na\right\rceil }^{\left\lfloor nb\right\rfloor -1}\phi_{\sigma}\left(
nx-d\right)  }\\
&  \leq\omega_{\left[  a,b\right]  }\left(  f,\delta_{n}\right)
\end{align*}
holds. In $I_{2}^{1}$ since $k\in B_{\delta_{n},n}\left(  x\right)  ,$ we get
$\frac{\left\vert nx-k\right\vert ^{1+\alpha}}{\left(  n\delta_{n}\right)
^{1+\alpha}}>1$ and therefore%
\begin{align*}
I_{2}^{1}  &  \leq\bigvee\limits_{k\in B_{\delta_{n},n}\left(  x\right)
}\left\vert f\left(  \frac{k}{n}\right)  -f\left(  x\right)  \right\vert
\wedge\dfrac{\phi_{\sigma}\left(  nx-k\right)  }{\bigvee\limits_{d=\left\lceil
na\right\rceil }^{\left\lfloor nb\right\rfloor -1}\phi_{\sigma}\left(
nx-d\right)  }\\
&  \leq\bigvee\limits_{k\in B_{\delta_{n},n}\left(  x\right)  }\dfrac
{\phi_{\sigma}\left(  nx-k\right)  }{\phi_{\sigma}\left(  2\right)  }\\
&  \leq\bigvee\limits_{k\in B_{\delta_{n},n}\left(  x\right)  }\dfrac
{\phi_{\sigma}\left(  nx-k\right)  }{\phi_{\sigma}\left(  2\right)  }%
\frac{\left\vert nx-k\right\vert ^{1+\alpha}}{\left(  n\delta_{n}\right)
^{1+\alpha}}\\
&  \leq\frac{m_{\left(  1+\alpha\right)  }\left(  \phi_{\sigma}\right)  }%
{\phi_{\sigma}\left(  2\right)  }\frac{1}{\left(  n\delta_{n}\right)
^{1+\alpha}}%
\end{align*}
holds, where $m_{\left(  1+\alpha\right)  }\left(  \phi_{\sigma}\right)  $ is
finite from Lemma \ref{lemf}. Now using the well known properties of modulus
of continutiy in $I_{1}$, we obtain%
\begin{align*}
I_{1}  &  \leq\bigvee\limits_{k=\left\lceil na\right\rceil }^{\left\lfloor
nb\right\rfloor -1}n\int\limits_{\frac{k}{n}}^{\frac{k+1}{n}}\omega_{\left[
a,b\right]  }\left(  f,\left\vert u-\frac{k}{n}\right\vert \right)
du\wedge\dfrac{\phi_{\sigma}\left(  nx-k\right)  }{\bigvee
\limits_{d=\left\lceil na\right\rceil }^{\left\lfloor nb\right\rfloor -1}%
\phi_{\sigma}\left(  nx-d\right)  }\\
&  \leq\bigvee\limits_{k=\left\lceil na\right\rceil }^{\left\lfloor
nb\right\rfloor -1}\omega_{\left[  a,b\right]  }\left(  f,\frac{1}{n}\right)
\wedge\dfrac{\phi_{\sigma}\left(  nx-k\right)  }{\bigvee\limits_{d=\left\lceil
na\right\rceil }^{\left\lfloor nb\right\rfloor -1}\phi_{\sigma}\left(
nx-d\right)  }\\
&  =\omega_{\left[  a,b\right]  }\left(  f,\frac{1}{n}\right)  \text{,}%
\end{align*}

which completes the proof.
\end{proof}

In the above theory, if we consider the H\"{o}lder continuous functions of
order $\beta$, that is, for a given $\beta\in(0,1],$ $Lip_{\left[  a,b\right]
}\left(  \beta\right)  $ is defined by%
\begin{align*}
Lip_{\left[  a,b\right]  }\left(  \beta\right)   &  :=\{ f\in C\left(  \left[
a,b\right]  ,\left[  0,1\right]  \right)  :\exists K>0\text{ such that
}\left\vert f\left(  x\right)  -f\left(  y\right)  \right\vert \leq
K\left\vert x-y\right\vert ^{\beta}\\
&  \left.  \text{ \ \ \ \ \ for all~}x,y\in\left[  a,b\right]  \right\}
\text{,}%
\end{align*}
then we get the following rate of approximation.

\begin{corollary}
\label{corol1}Let $f\in Lip_{\left[  a,b\right]  }\left(  \beta\right)  $.
Then
\[
\left\Vert K_{n}^{\left(  m\right)  }\left(  f\right)  -f\right\Vert _{\infty
}=O\left(  n^{-\frac{\left(  1+\alpha\right)  \beta}{1+\alpha+\beta}}\right)
\text{ as }n\rightarrow\infty
\]
holds.
\end{corollary}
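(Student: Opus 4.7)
The plan is to obtain the Corollary as a direct consequence of Theorem \ref{thm5} by specializing the general modulus-of-continuity estimate to Hölder functions and then choosing the free parameter $\delta_n$ so as to balance the two competing contributions.

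First I would record the obvious consequence of $f\in Lip_{[a,b]}(\beta)$: there exists $K>0$ with
\[
\omega_{[a,b]}(f,\delta)\le K\,\delta^{\beta}\quad\text{for every }\delta>0.
\]
Applied to the bound in Theorem \ref{thm5}, this gives
\[
\|K_n^{(m)}(f)-f\|_\infty \;\le\; K\,n^{-\beta} \;+\; K\,\delta_n^{\beta}\;\bigvee\;\frac{m_{(1+\alpha)}(\phi_\sigma)}{\phi_\sigma(2)}\,(n\delta_n)^{-(1+\alpha)}.
\]

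The only nontrivial step is the optimization in $\delta_n$. I would equate the two terms inside the maximum, i.e.\ set $\delta_n^{\beta}=(n\delta_n)^{-(1+\alpha)}$, which yields
\[
\delta_n \;=\; n^{-\frac{1+\alpha}{1+\alpha+\beta}}.
\]
A quick check confirms the hypotheses of Theorem \ref{thm5}: $\delta_n\to 0$, and $n\delta_n=n^{\beta/(1+\alpha+\beta)}\to\infty$ so $(n\delta_n)^{-1}$ is also a null sequence. With this choice both $\delta_n^{\beta}$ and $(n\delta_n)^{-(1+\alpha)}$ equal $n^{-(1+\alpha)\beta/(1+\alpha+\beta)}$.

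Finally I would compare with the remaining $n^{-\beta}$ term: since $\frac{1+\alpha}{1+\alpha+\beta}<1$ we have $\frac{(1+\alpha)\beta}{1+\alpha+\beta}<\beta$, hence $n^{-\beta}=o\bigl(n^{-(1+\alpha)\beta/(1+\alpha+\beta)}\bigr)$ and it is absorbed into the main term. Combining yields
\[
\|K_n^{(m)}(f)-f\|_\infty \;=\; O\!\left(n^{-\frac{(1+\alpha)\beta}{1+\alpha+\beta}}\right),
\]
as claimed. There is no real obstacle here beyond the bookkeeping of the balancing exponent; the whole argument is essentially a one-parameter optimization on top of Theorem \ref{thm5}, with Lemma \ref{lemf} ensuring finiteness of $m_{(1+\alpha)}(\phi_\sigma)$.
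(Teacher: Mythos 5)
Your proof is correct and follows essentially the same route as the paper: the same balancing choice $\delta_n = n^{-(1+\alpha)/(1+\alpha+\beta)}$ equating $\delta_n^{\beta}$ with $(n\delta_n)^{-(1+\alpha)}$, and the same observation that the $n^{-\beta}$ term is dominated. The only (cosmetic) difference is that you invoke Theorem \ref{thm5} as a black box, whereas the paper re-runs the splitting from \eqref{55} inline with the H\"older bound inserted.
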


\begin{proof}
Since $f\in Lip_{\left[  a,b\right]  }\left(  \beta\right)  $, from (\ref{55})
we can easily see that%
\begin{align*}
\left\vert K_{n}^{\left(  m\right)  }\left(  f;x\right)  -f\left(  x\right)
\right\vert  &  \leq\bigvee\limits_{k=\left\lceil na\right\rceil
}^{\left\lfloor nb\right\rfloor -1}Kn\int\limits_{\frac{k}{n}}^{\frac{k+1}{n}%
}\left(  u-\frac{k}{n}\right)  ^{\beta}du\wedge\dfrac{\phi_{\sigma}\left(
nx-k\right)  }{\bigvee\limits_{d=\left\lceil na\right\rceil }^{\left\lfloor
nb\right\rfloor -1}\phi_{\sigma}\left(  nx-d\right)  }\\
&  +\bigvee\limits_{k=\left\lceil na\right\rceil }^{\left\lfloor
nb\right\rfloor -1}Kn\int\limits_{\frac{k}{n}}^{\frac{k+1}{n}}\left\vert
\frac{k}{n}-x\right\vert ^{\beta}du\wedge\dfrac{\phi_{\sigma}\left(
nx-k\right)  }{\bigvee\limits_{d=\left\lceil na\right\rceil }^{\left\lfloor
nb\right\rfloor -1}\phi_{\sigma}\left(  nx-d\right)  }\\
&  =:J_{1}+J_{2}%
\end{align*}
where
\[
J_{1}=\frac{K}{\left(  \beta+1\right)  n^{\beta}}%
\]
for some $K>0$. Taking $\delta_{n}=\dfrac{1}{n^{\left(  1+\alpha\right)
/\left(  1+\alpha+\beta\right)  }}$, if we seperate the maximum operation in
$J_{2}$ as follows,
\begin{align*}
J_{2}  &  =\bigvee\limits_{k\in B_{\delta_{n},n}\left(  x\right)  }%
Kn\int\limits_{\frac{k}{n}}^{\frac{k+1}{n}}\left\vert \frac{k}{n}-x\right\vert
^{\beta}du\wedge\dfrac{\phi_{\sigma}\left(  nx-k\right)  }{\bigvee
\limits_{d=\left\lceil na\right\rceil }^{\left\lfloor nb\right\rfloor -1}%
\phi_{\sigma}\left(  nx-d\right)  }\\
&
%TCIMACRO{\dbigvee }%
%BeginExpansion
{\displaystyle\bigvee}
%EndExpansion
\bigvee\limits_{k\notin B_{\delta_{n},n}\left(  x\right)  }Kn\int
\limits_{\frac{k}{n}}^{\frac{k+1}{n}}\left\vert \frac{k}{n}-x\right\vert
^{\beta}du\wedge\dfrac{\phi_{\sigma}\left(  nx-k\right)  }{\bigvee
\limits_{d=\left\lceil na\right\rceil }^{\left\lfloor nb\right\rfloor -1}%
\phi_{\sigma}\left(  nx-d\right)  }\\
&  =:J_{2}^{1}%
%TCIMACRO{\tbigvee }%
%BeginExpansion
{\textstyle\bigvee}
%EndExpansion
J_{2}^{2}%
\end{align*}

then there holds%
\begin{align}
J_{2}^{1}  &  \leq\bigvee\limits_{k\in B_{\delta_{n},n}\left(  x\right)
}\dfrac{\phi_{\sigma}\left(  nx-k\right)  }{\bigvee\limits_{d=\left\lceil
na\right\rceil }^{\left\lfloor nb\right\rfloor -1}\phi_{\sigma}\left(
nx-d\right)  }\label{b}\\
&  \leq\bigvee\limits_{k\in B_{\delta_{n},n}\left(  x\right)  }\dfrac
{\phi_{\sigma}\left(  nx-k\right)  }{\phi_{\sigma}\left(  2\right)  }%
\frac{\left\vert nx-k\right\vert ^{1+\alpha}}{\left(  n\delta_{n}\right)
^{1+\alpha}}\nonumber\\
&  \leq\frac{m_{1+\alpha}\left(  \phi_{\sigma}\right)  }{\phi_{\sigma}\left(
2\right)  }\frac{1}{n^{\frac{\beta\left(  1+\alpha\right)  }{1+\alpha+\beta}}%
}\text{.}\nonumber
\end{align}
On the other hand, since $k\notin B_{\delta_{n},n}\left(  x\right)  $ in
$J_{2}^{2}$ we have
\begin{align}
J_{2}^{2}  &  \leq\bigvee\limits_{k\notin B_{\delta_{n},n}\left(  x\right)
}K\left(  \delta_{n}\right)  ^{\beta}\wedge\dfrac{\phi_{\sigma}\left(
nx-k\right)  }{\bigvee\limits_{d=\left\lceil na\right\rceil }^{\left\lfloor
nb\right\rfloor -1}\phi_{\sigma}\left(  nx-d\right)  }\label{c}\\
&  \leq\frac{K}{n^{\frac{\beta\left(  1+\alpha\right)  }{1+\alpha+\beta}}%
}\text{.}\nonumber
\end{align}
Finally, since
\begin{equation}
\frac{K}{\left(  \beta+1\right)  n^{\beta}}\leq\frac{1}{n^{\frac{\beta\left(
1+\alpha\right)  }{1+\alpha+\beta}}} \label{d}%
\end{equation}
for sufficiently large $n\in\mathbb{N}$, from (\ref{b}), (\ref{c}) and
(\ref{d}) we conclude%
\[
\left\Vert K_{n}^{\left(  m\right)  }\left(  f\right)  -f\right\Vert _{\infty
}=O\left(  n^{-\frac{\left(  1+\alpha\right)  \beta}{1+\alpha+\beta}}\right)
\text{ as }n\rightarrow\infty\text{.}%
\]

\end{proof}

In this part, inspired by \cite{est3,est1,est2}, we provide quantitative
estimates for Kantorovich type max-min neural network operators with the help
of $K$-functionals introduced by Peetre in \cite{peetre}. First we recall the
definition of $K$-functionals, which is adapted to max-min case. For a given
$f\in L^{p}\left(  \left[  a,b\right]  ,\left[  0,1\right]  \right)  $ $(1\leq
p<\infty)$%

\[
\mathcal{K}\left(  f,\delta\right)  _{p}:=\inf_{g\in C^{1}\left(  \left[
a,b\right]  ,\left[  0,1\right]  \right)  }\left\{  \left\Vert f-g\right\Vert
_{p}^{\frac{\alpha}{\alpha+1}}+\delta\left\Vert g^{\prime}\right\Vert
_{\infty}\right\}
\]
where $\delta>0$. According to this definition, $\mathcal{K}\left(
f,\delta\right)  _{p}<\varepsilon$ $\left(  \varepsilon>0\right)  $ for
sufficiently small $\delta>0$ means that $f$ can be approximated with an error
$\left\Vert f-g\right\Vert _{p}<\varepsilon^{\frac{\alpha+1}{\alpha}},$ where
$g\in C^{1}\left(  \left[  a,b\right]  ,\left[  0,1\right]  \right)  \subset
L^{p}\left(  \left[  a,b\right]  ,\left[  0,1\right]  \right)  $ and whose
derivative under supremum norm is not too large. $K$-functionals provide us
some information about the smoothness and approximation properties of $f$, and
may be considered as a modulus of smoothness in some situations in $L^{p}$
spaces. For the importance of $K$-functionals in approximation theory, we
refer to \cite{butzer}.

\begin{theorem}
\label{thm6}Let $f\in L^{p}\left(  \left[  a,b\right]  ,\left[  0,1\right]
\right)  $ for $1\leq p<\infty$. Then we have
\[
\left\Vert K_{n}^{\left(  m\right)  }\left(  f\right)  -f\right\Vert _{p}\leq
A\mathcal{K}\left(  f,Bn^{-\frac{1+\alpha}{2+\alpha}}\right)  _{p}%
+\frac{m_{\left(  1+\alpha\right)  }\left(  \phi_{\sigma}\right)  \left(
b-a\right)  ^{\frac{1}{p}}}{\phi_{\sigma}\left(  2\right)  }n^{-\frac
{1+\alpha}{2+\alpha}}%
\]
where $A=\left[  \left\{  \frac{2M}{\alpha\phi_{\sigma}\left(  2\right)
}+2\right\}  ^{\frac{1}{p}}+\left(  b-a\right)  ^{\frac{1}{p\left(
1+\alpha\right)  }}\right]  $ and $B=\frac{\left(  3/2\right)  \left(
b-a\right)  ^{\frac{1}{p}}}{A}$.
\end{theorem}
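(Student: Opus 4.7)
The plan is the classical $K$-functional strategy: for an arbitrary $g \in C^{1}([a,b], [0,1])$, apply the triangle inequality
\[
\|K_n^{(m)}(f) - f\|_p \leq \|K_n^{(m)}(f) - K_n^{(m)}(g)\|_p + \|K_n^{(m)}(g) - g\|_p + \|g - f\|_p,
\]
estimate each piece in terms of $\|f-g\|_p^{\alpha/(1+\alpha)}$ and $\|g'\|_\infty$, and then take the infimum over $g$ at the end to recover $\mathcal{K}(f,\cdot)_p$.

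For the first summand I would directly recycle the chain of estimates in the proof of Theorem \ref{theorem4} culminating in (\ref{6}), which yields
\[
\|K_n^{(m)}(f) - K_n^{(m)}(g)\|_p \leq \Bigl\{\tfrac{2M}{\alpha\phi_\sigma(2)}+2\Bigr\}^{1/p}\|f-g\|_p^{\alpha/(1+\alpha)}.
\]
For the third summand I would use $|f-g|\leq 1$ on $[a,b]$ to get $\|f-g\|_p \leq (b-a)^{1/p}$, hence the interpolation
\[
\|g-f\|_p \leq (b-a)^{1/[p(1+\alpha)]}\|f-g\|_p^{\alpha/(1+\alpha)}.
\]
Adding these two contributions produces exactly the constant $A$ from the statement, multiplied by $\|f-g\|_p^{\alpha/(1+\alpha)}$.

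For the middle summand I would re-run the proof of Corollary \ref{corol1} with $\beta = 1$, $K = \|g'\|_\infty$ and $\delta_n = n^{-(1+\alpha)/(2+\alpha)}$. The three pieces become $J_1 \leq \tfrac{1}{2}\|g'\|_\infty n^{-(1+\alpha)/(2+\alpha)}$ (using $1/n \leq n^{-(1+\alpha)/(2+\alpha)}$ since the exponent lies in $(0,1)$), $J_2^1 \leq \tfrac{m_{1+\alpha}(\phi_\sigma)}{\phi_\sigma(2)} n^{-(1+\alpha)/(2+\alpha)}$, and $J_2^2 \leq \|g'\|_\infty n^{-(1+\alpha)/(2+\alpha)}$. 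Passing from sup-norm to $L^p$ via $\|h\|_p \leq (b-a)^{1/p}\|h\|_\infty$ gives
\[
\|K_n^{(m)}(g) - g\|_p \leq (b-a)^{1/p}\Bigl[\tfrac{3}{2}\|g'\|_\infty + \tfrac{m_{1+\alpha}(\phi_\sigma)}{\phi_\sigma(2)}\Bigr] n^{-(1+\alpha)/(2+\alpha)}.
\]

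Summing all three bounds and rewriting the $g$-dependent portion as $A\bigl[\|f-g\|_p^{\alpha/(1+\alpha)} + B n^{-(1+\alpha)/(2+\alpha)}\|g'\|_\infty\bigr]$ with $B = (3/2)(b-a)^{1/p}/A$, the infimum over $g \in C^{1}([a,b],[0,1])$ reproduces $A\,\mathcal{K}(f, B n^{-(1+\alpha)/(2+\alpha)})_p$, while the remaining kernel-dependent summand $\tfrac{m_{1+\alpha}(\phi_\sigma)(b-a)^{1/p}}{\phi_\sigma(2)}n^{-(1+\alpha)/(2+\alpha)}$ stays outside the infimum, matching the claim. The main technical nuisance I expect is the hypothesis $\tilde{N} \geq N$ used to derive (\ref{6}), which demands $\|f-g\|_p$ to be small; this I would deal with either by absorbing the regime of large $\|f-g\|_p$ into the constant $A$ through the trivial bound $\|K_n^{(m)}(f)-K_n^{(m)}(g)\|_p \leq 2(b-a)^{1/p}$, or by observing that the infimum defining $\mathcal{K}(f,\delta)_p$ is realized by $g$'s arbitrarily close to $f$ in $L^p$.
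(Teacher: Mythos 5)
Your proposal follows essentially the same route as the paper's proof: the same three-term triangle inequality, the bound \eqref{6} for $\Vert K_{n}^{(m)}(f)-K_{n}^{(m)}(g)\Vert _{p}$, the interpolation $\Vert f-g\Vert _{p}\leq (b-a)^{1/(p(1+\alpha))}\Vert f-g\Vert _{p}^{\alpha /(1+\alpha )}$, and the $\beta =1$, $\delta _{n}=n^{-(1+\alpha )/(2+\alpha )}$ splitting for the middle term (the paper writes this out directly as $V_{1}$, $V_{2}^{1}$, $V_{2}^{2}$ rather than citing Corollary \ref{corol1}, but the computation is identical). Your closing remark about the condition $\tilde{N}\geq N$ is a legitimate point that the paper itself passes over silently; your first fix (absorbing the regime of large $\Vert f-g\Vert _{p}$ via the trivial bound, at the cost of the constant) is the sound one, whereas the claim that the infimum is realized by $g$ close to $f$ in $L^{p}$ need not hold, since the $K$-functional can be approached by smooth $g$ far from $f$ with small $\Vert g^{\prime }\Vert _{\infty }$.
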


\begin{proof}
Let $g\in C^{1}\left(  \left[  a,b\right]  ,\left[  0,1\right]  \right)  $ be
given. We know from (\ref{5}) and (\ref{6}) that%
\begin{align*}
&  \left\Vert K_{n}^{\left(  m\right)  }\left(  f\right)  -f\right\Vert _{p}\\
&  \leq\left\Vert K_{n}^{\left(  m\right)  }\left(  f\right)  -K_{n}^{\left(
m\right)  }\left(  g\right)  \right\Vert _{p}+\left\Vert K_{n}^{\left(
m\right)  }\left(  g\right)  -g\right\Vert _{p}+\left\Vert g-f\right\Vert
_{p}\\
&  \leq\left(  \left\{  \frac{2M}{\alpha\phi_{\sigma}\left(  2\right)
}+2\right\}  ^{\frac{1}{p}}+\left\Vert f-g\right\Vert _{p}^{\frac{1}{1+\alpha
}}\right)  \left\Vert f-g\right\Vert _{p}^{\frac{\alpha}{1+\alpha}}+\left\Vert
K_{n}^{\left(  m\right)  }\left(  g\right)  -g\right\Vert _{p}\\
&  \leq\left(  \left\{  \frac{2M}{\alpha\phi_{\sigma}\left(  2\right)
}+2\right\}  ^{\frac{1}{p}}+\left(  b-a\right)  ^{\frac{1}{p\left(
1+\alpha\right)  }}\right)  \left\Vert f-g\right\Vert _{p}^{\frac{\alpha
}{1+\alpha}}+\left\Vert K_{n}^{\left(  m\right)  }\left(  g\right)
-g\right\Vert _{p}\text{.}%
\end{align*}
On the other hand, since $g\in C^{1}\left(  \left[  a,b\right]  ,\left[
0,1\right]  \right)  $%
\begin{align*}
\left\vert K_{n}^{\left(  m\right)  }\left(  g;x\right)  -g\left(  x\right)
\right\vert  &  \leq\bigvee\limits_{k=\left\lceil na\right\rceil
}^{\left\lfloor nb\right\rfloor -1}n\int\limits_{\frac{k}{n}}^{\frac{k+1}{n}%
}\left\vert g\left(  u\right)  -g\left(  x\right)  \right\vert du\wedge
\dfrac{\phi_{\sigma}\left(  nx-k\right)  }{\bigvee\limits_{d=\left\lceil
na\right\rceil }^{\left\lfloor nb\right\rfloor -1}\phi_{\sigma}\left(
nx-d\right)  }\\
&  \leq\bigvee\limits_{k=\left\lceil na\right\rceil }^{\left\lfloor
nb\right\rfloor -1}n\left\Vert g^{\prime}\right\Vert _{\infty}\int
\limits_{\frac{k}{n}}^{\frac{k+1}{n}}\left\vert u-x\right\vert du\wedge
\dfrac{\phi_{\sigma}\left(  nx-k\right)  }{\bigvee\limits_{d=\left\lceil
na\right\rceil }^{\left\lfloor nb\right\rfloor -1}\phi_{\sigma}\left(
nx-d\right)  }\\
&  \leq\bigvee\limits_{k=\left\lceil na\right\rceil }^{\left\lfloor
nb\right\rfloor -1}n\left\Vert g^{\prime}\right\Vert _{\infty}\int
\limits_{\frac{k}{n}}^{\frac{k+1}{n}}\left(  u-\frac{k}{n}\right)
du\wedge\dfrac{\phi_{\sigma}\left(  nx-k\right)  }{\bigvee
\limits_{d=\left\lceil na\right\rceil }^{\left\lfloor nb\right\rfloor -1}%
\phi_{\sigma}\left(  nx-d\right)  }\\
&  +\bigvee\limits_{k=\left\lceil na\right\rceil }^{\left\lfloor
nb\right\rfloor -1}n\left\Vert g^{\prime}\right\Vert _{\infty}\int
\limits_{\frac{k}{n}}^{\frac{k+1}{n}}\left\vert \frac{k}{n}-x\right\vert
du\wedge\dfrac{\phi_{\sigma}\left(  nx-k\right)  }{\bigvee
\limits_{d=\left\lceil na\right\rceil }^{\left\lfloor nb\right\rfloor -1}%
\phi_{\sigma}\left(  nx-d\right)  }\\
&  =:V_{1}+V_{2}%
\end{align*}
holds. Now evaluating the integral in $V_{1}$, we obtain that%
\begin{align*}
V_{1}  &  =\bigvee\limits_{k=\left\lceil na\right\rceil }^{\left\lfloor
nb\right\rfloor -1}n\left\Vert g^{\prime}\right\Vert _{\infty}\int
\limits_{\frac{k}{n}}^{\frac{k+1}{n}}\left(  u-\frac{k}{n}\right)
du\wedge\dfrac{\phi_{\sigma}\left(  nx-k\right)  }{\bigvee
\limits_{d=\left\lceil na\right\rceil }^{\left\lfloor nb\right\rfloor -1}%
\phi_{\sigma}\left(  nx-d\right)  }\\
&  =\bigvee\limits_{k=\left\lceil na\right\rceil }^{\left\lfloor
nb\right\rfloor -1}\frac{\left\Vert g^{\prime}\right\Vert _{\infty}}{2n}%
\wedge\dfrac{\phi_{\sigma}\left(  nx-k\right)  }{\bigvee\limits_{d=\left\lceil
na\right\rceil }^{\left\lfloor nb\right\rfloor -1}\phi_{\sigma}\left(
nx-d\right)  }\\
&  =\frac{\left\Vert g^{\prime}\right\Vert _{\infty}}{2n}\leq\frac{\left\Vert
g^{\prime}\right\Vert _{\infty}}{2n^{\frac{1+\alpha}{2+\alpha}}}\text{.}%
\end{align*}
It is obvious that
\[
V_{2}\leq\bigvee\limits_{k=\left\lceil na\right\rceil }^{\left\lfloor
nb\right\rfloor -1}\left\Vert g^{\prime}\right\Vert _{\infty}\left\vert
\frac{k}{n}-x\right\vert \wedge\dfrac{\phi_{\sigma}\left(  nx-k\right)
}{\bigvee\limits_{d=\left\lceil na\right\rceil }^{\left\lfloor nb\right\rfloor
-1}\phi_{\sigma}\left(  nx-d\right)  }\text{.}%
\]
Now from the seperation of maksimum operation as follows%
\begin{align*}
V_{2}  &  \leq\bigvee\limits_{k\in B_{\delta_{n},n}\left(  x\right)
}\left\Vert g^{\prime}\right\Vert _{\infty}\left\vert \frac{k}{n}-x\right\vert
\wedge\dfrac{\phi_{\sigma}\left(  nx-k\right)  }{\bigvee\limits_{d=\left\lceil
na\right\rceil }^{\left\lfloor nb\right\rfloor -1}\phi_{\sigma}\left(
nx-d\right)  }\\
&
%TCIMACRO{\dbigvee }%
%BeginExpansion
{\displaystyle\bigvee}
%EndExpansion
\bigvee\limits_{k\notin B_{\delta_{n},n}\left(  x\right)  }\left\Vert
g^{\prime}\right\Vert _{\infty}\left\vert \frac{k}{n}-x\right\vert
\wedge\dfrac{\phi_{\sigma}\left(  nx-k\right)  }{\bigvee\limits_{d=\left\lceil
na\right\rceil }^{\left\lfloor nb\right\rfloor -1}\phi_{\sigma}\left(
nx-d\right)  }\\
&  =:V_{2}^{1}%
%TCIMACRO{\dbigvee }%
%BeginExpansion
{\displaystyle\bigvee}
%EndExpansion
V_{2}^{2}%
\end{align*}
where $\delta_{n}=\frac{1}{n^{\frac{1+\alpha}{2+\alpha}}}$ we obtain%
\[
V_{2}^{2}\leq\frac{\left\Vert g^{\prime}\right\Vert _{\infty}}{n^{\frac
{1+\alpha}{2+\alpha}}}\text{.}%
\]
In $V_{2}^{1}$, since $k\in B_{\delta_{n},n}\left(  x\right)  $,
\begin{align*}
V_{2}^{1}  &  \leq\bigvee\limits_{k\in B_{\delta_{n},n}\left(  x\right)
}\dfrac{\phi_{\sigma}\left(  nx-k\right)  }{\phi_{\sigma}\left(  2\right)  }\\
&  \leq\bigvee\limits_{k\in B_{\delta_{n},n}\left(  x\right)  }\dfrac
{\phi_{\sigma}\left(  nx-k\right)  }{\phi_{\sigma}\left(  2\right)  }%
\frac{\left\vert nx-k\right\vert ^{1+\alpha}}{\left(  n\delta_{n}\right)
^{1+\alpha}}\\
&  \leq\frac{m_{\left(  1+\alpha\right)  }\left(  \phi_{\sigma}\right)  }%
{\phi_{\sigma}\left(  2\right)  }\frac{1}{n^{\frac{1+\alpha}{2+\alpha}}}%
\end{align*}
and therefore
\begin{align*}
V_{2}  &  \leq\frac{\left\Vert g^{\prime}\right\Vert _{\infty}}{n^{\frac
{1+\alpha}{2+\alpha}}}%
%TCIMACRO{\dbigvee }%
%BeginExpansion
{\displaystyle\bigvee}
%EndExpansion
\frac{m_{\left(  1+\alpha\right)  }\left(  \phi_{\sigma}\right)  }%
{\phi_{\sigma}\left(  2\right)  }\frac{1}{n^{\frac{1+\alpha}{2+\alpha}}}\\
&  \leq\frac{\left\Vert g^{\prime}\right\Vert _{\infty}}{n^{\frac{1+\alpha
}{2+\alpha}}}+\frac{m_{\left(  1+\alpha\right)  }\left(  \phi_{\sigma}\right)
}{\phi_{\sigma}\left(  2\right)  }\frac{1}{n^{\frac{1+\alpha}{2+\alpha}}}%
\end{align*}
holds. Then we obtain%
\[
\left\Vert K_{n}^{\left(  m\right)  }\left(  g\right)  -g\right\Vert _{p}%
\leq\left(  \frac{3}{2n^{\frac{1+\alpha}{2+\alpha}}}\left\Vert g^{\prime
}\right\Vert _{\infty}+\frac{m_{\left(  1+\alpha\right)  }\left(  \phi
_{\sigma}\right)  }{\phi_{\sigma}\left(  2\right)  }\frac{1}{n^{\frac
{1+\alpha}{2+\alpha}}}\right)  \left(  b-a\right)  ^{\frac{1}{p}}%
\]
and hence%
\begin{align*}
\left\Vert K_{n}^{\left(  m\right)  }\left(  f\right)  -f\right\Vert _{p}  &
\leq\left(  \left\{  \frac{2M}{\alpha\phi_{\sigma}\left(  2\right)
}+2\right\}  ^{\frac{1}{p}}+\left(  b-a\right)  ^{\frac{1}{p\left(
1+\alpha\right)  }}\right)  \left\Vert f-g\right\Vert _{p}^{\frac{\alpha
}{1+\alpha}}\\
&  +\frac{3\left(  b-a\right)  ^{\frac{1}{p}}}{2}\frac{1}{n^{\frac{1+\alpha
}{2+\alpha}}}\left\Vert g^{\prime}\right\Vert _{\infty}+\frac{m_{\left(
1+\alpha\right)  }\left(  \phi_{\sigma}\right)  \left(  b-a\right)  ^{\frac
{1}{p}}}{\phi_{\sigma}\left(  2\right)  }\frac{1}{n^{\frac{1+\alpha}{2+\alpha
}}}\\
&  =A\mathcal{K}\left(  f,B\frac{1}{n^{\frac{1+\alpha}{2+\alpha}}}\right)
_{p}+\frac{m_{\left(  1+\alpha\right)  }\left(  \phi_{\sigma}\right)  \left(
b-a\right)  ^{\frac{1}{p}}}{\phi_{\sigma}\left(  2\right)  }\frac{1}%
{n^{\frac{1+\alpha}{2+\alpha}}}%
\end{align*}
which completes the proof.
\end{proof}

\begin{remark}
\label{remm}If the infimum of $g\in C^{1}\left(  \left[  a,b\right]  ,\left[
0,1\right]  \right)  $ is not a constant function in the above theorem, then
we get%
\[
\left\Vert K_{n}^{\left(  m\right)  }\left(  f\right)  -f\right\Vert _{p}\leq
A\mathcal{K}\left(  f,Bn^{-\frac{1+\alpha}{2+\alpha}}\right)  _{p}\text{,}%
\]
where $A$ is given above and $B=\left\{  \left(  \frac{1}{2}+1\vee
\frac{m_{\left(  1+\alpha\right)  }\left(  \phi_{\sigma}\right)  }%
{\phi_{\sigma}\left(  2\right)  \left\Vert g^{\prime}\right\Vert _{\infty}%
}\right)  \left(  b-a\right)  ^{\frac{1}{p}}\right\}  /A$.
\end{remark}

\section{Applications}

We present now some well-known sigmoidal functions which satisfy the
conditions of our theory and consider a few scenarios where the Kantorovich
type max-min neural network operators can be applied. In particular, we study
the influence of the sigmoidal function $\sigma$ and the parameter $n$ on the
approximation of discontinuous functions and the effects of these operator
when applied to noisy signals, for instance, in the case of noisy ECG signals.
Furthermore, we compare the approximation error of this operator with those of
other well-known neural-network operators as the max-product operators and the
classical linear variants in the space $L^{1}\left(  \left[  0,1\right]
,\left[  0,1\right]  \right)  $.

\vspace{2mm}

\textbf{\noindent Sigmoidal functions.} Regarding admissible sigmoidal
functions, it is well-known that the logistic sigmoidal function
(\cite{anas1,anas3,anas2,cao})
\[
\sigma_{l}:=\frac{1}{1+e^{-x}}\text{ \ \ }\left(  x\in\mathbb{R}\right)
\]
and the hyperbolic tangent function (\cite{anas3, anas4, anas5})%
\[
\sigma_{h}:=\frac{1}{2}\left(  \tanh x+1\right)  \text{ \ \ }\left(
x\in\mathbb{R}\right)
\]
satisfy condition $\left(  \Sigma3\right)  $ for all $\alpha>0.$ In view of
Remark \ref{rem0}, also the ramp function (\cite{cao2, cheang})%
\[
\sigma_{R}:=\left\{
\begin{array}
[c]{ll}%
0, & \text{if }x<-1/2\\
x+1/2, & \text{if }-1/2\leq x\leq1/2\\
1, & \text{if }x>1/2
\end{array}
\right.  \text{ \ \ }\left(  x\in\mathbb{R}\right)
\]
can be an example for a non-smooth sigmoidal function which satisfies $\left(
\Sigma3\right)  $ for all $\alpha>0.$ We further introduce a non-continuous
sigmoidal function $\sigma_{three}$ given by%
\[
\sigma_{three}:=\left\{
\begin{array}
[c]{ll}%
0, & \text{if }x<-1/2\\
1/2, & \text{if }-1/2\leq x\leq1/2\\
1, & \text{if }x>1/2
\end{array}
\right.  \text{ \ \ }\left(  x\in\mathbb{R}\right)  \text{ \ \cite{cost2}}%
\]
and a fifth variant $\sigma_{\gamma}$ ($0<\gamma\leq1$) defined by%
\[
\sigma_{\gamma}:=\left\{
\begin{array}
[c]{ll}%
\dfrac{1}{\left\vert x\right\vert ^{\gamma}+2}, & \text{if }x<-2^{1/\gamma}\\
2^{-\left(  1/\gamma\right)  -2}x+\left(  1/2\right)  , & \text{if
}-2^{1/\gamma}\leq x\leq2^{1/\gamma}\\
\dfrac{x^{\gamma}+1}{x^{\gamma}+2}, & \text{if }x>2^{1/\gamma}%
\end{array}
\right.  \text{ \ \ }\left(  x\in\mathbb{R}\right)  \text{ \ \cite{cost2}.}%
\]
Notice that $\sigma_{three}$ satisfies the property $\left(  \Sigma3\right)  $
for all $\alpha>0$, while $\phi_{\sigma_{\gamma}}\left(  x\right)  =O (
\left\vert x\right\vert ^{-1-\gamma} ) $ \ (see \cite{cost25}). Note also that
the kernels $\phi_{\sigma_{l}}$, $\phi_{\sigma_{h}}$ and $\phi_{\sigma
_{\gamma}}$ do not have compact support, whereas $\phi_{\sigma_{R}}$ and
$\phi_{\sigma_{three}}$ are compactly supported.

As a particular test function for our experiments, we consider the
discontinuous function $f:\left[  0,1\right]  \rightarrow\left[  0,1\right]  $
defined by%
\[
f\left(  x\right)  :=\left\{
\begin{array}
[c]{ll}%
0.2, & \text{if }0\leq x\leq0.2\\
0.9, & \text{if }0.2<x\leq0.5\\
0.3, & \text{if }0.5<x\leq0.8\\
0.6, & \text{if }0.8<x\leq1
\end{array}
\right.  \text{.}%
\]

\begin{figure}[ptb]
\centering
\includegraphics[height=5.1cm]{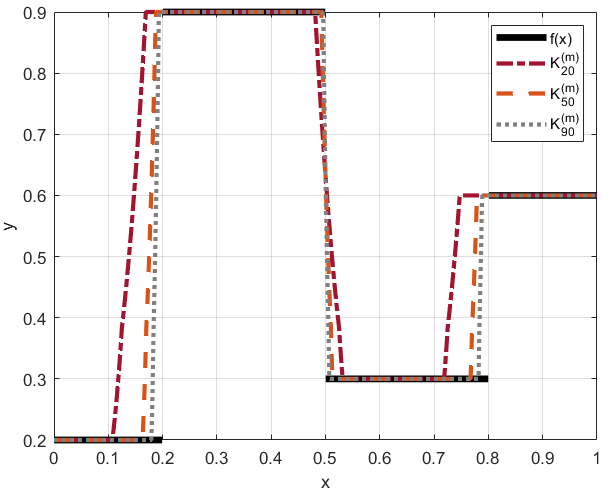}
\includegraphics[height=5.1cm]{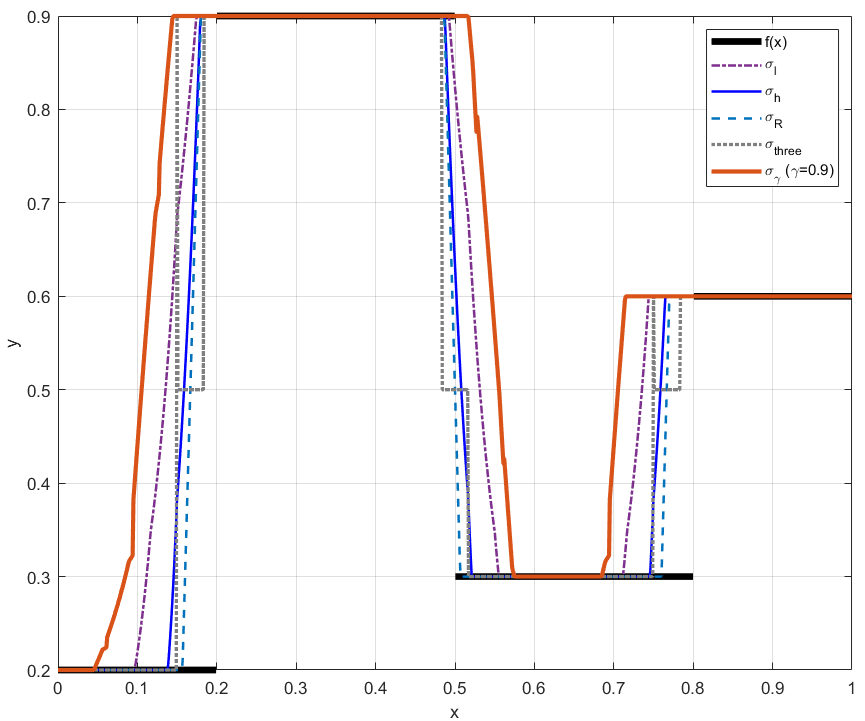} \caption{Left: Approximation
quality of Kantorovich type max-min operator $K_{n}^{(m)}(f)$ for increasing
sampling rate $n$. Right: Approximation of $f$ by $K_{30}^{(m)}(f)$ using
different sigmoidal functions.}%
\label{fig1}%
\end{figure}

Choosing $\sigma=\sigma_{h}$, the approximation by the Kantorovich type
max-min operator $K_{n}^{\left(  m\right)  }\left(  f\right)  $ is illustrated
in Figure \ref{fig1} (left) for increasing values of $n$. For the
discontinuous function $f$ it gets visible that point-wise convergence at the
discontinuity points can in general not be expected, but that $L^{p}%
$-convergence of $K_{n}^{\left(  m\right)  }\left(  f\right)  $ towards $f$ is
available. This is confirmed theoretically by Theorem \ref{theorem4} and gets
also apparent from the listed $L^{1}$-errors in Table \ref{table1}. In order
to give a broader picture about the approximation behavior of the different
kernels to the reader, we compare also the five introduced sigmoidal functions
($\sigma_{l},\sigma_{h},\sigma_{R},\sigma_{three},\sigma_{\gamma}$) for the
approximation of the same function $f$ taking $n=30$ sampling points. The
respective result is visualized in Figure \ref{fig1} (right).

\vspace{2mm}

\textbf{\noindent Comparison with max-product and linear neural network
operators.} In a next step, we want to compare the Kantorovich type max-min
operator with the Kantorovich variants of the max-product and the linear
neural network operators that are given as%
\[
K_{n}^{\left(  M\right)  }\left(  f;x\right)  :=\bigvee\limits_{k=\left\lceil
na\right\rceil }^{\left\lfloor nb\right\rfloor -1}n\int\limits_{\frac{k}{n}%
}^{\frac{k+1}{n}}f\left(  u\right)  du\dfrac{\phi_{\sigma}\left(  nx-k\right)
}{\bigvee\limits_{d=\left\lceil na\right\rceil }^{\left\lfloor nb\right\rfloor
-1}\phi_{\sigma}\left(  nx-d\right)  }\text{ \ \ (see \cite{cost25})}%
\]
and
\[
K_{n}\left(  f;x\right)  :=\sum\limits_{k=\left\lceil na\right\rceil
}^{\left\lfloor nb\right\rfloor -1}n\int\limits_{\frac{k}{n}}^{\frac{k+1}{n}%
}f\left(  u\right)  du\dfrac{\phi_{\sigma}\left(  nx-k\right)  }%
{\sum\limits_{d=\left\lceil na\right\rceil }^{\left\lfloor nb\right\rfloor
-1}\phi_{\sigma}\left(  nx-d\right)  }\text{ \ (see \cite{cost4})}%
\]
respectively. Taking these operators into account using the hyperbolic tangent
sigmoidal function $\sigma= \sigma_{h}$, we get the approximation errors in
the space $L^{1}\left(  \left[  0,1\right]  ,\left[  0,1\right]  \right)  $
listed in Table \ref{table1} and Figure \ref{fig2} (left). It can be seen,
that the Kantorovich variants of all three neural network operators behave
very similarly in terms of the approximation error with the max-product
variant having a slightly better performance than the max-min and the
classical linear variant. On the other hand, if we compare the computational
time of the operation, the max-min and max-product operators clearly
outperform the classical operator for sampling numbers $n>660$ (see Figure
\ref{fig2} (right)).

\begin{figure}[ptb]
\centering
\includegraphics[height = 4.2cm]{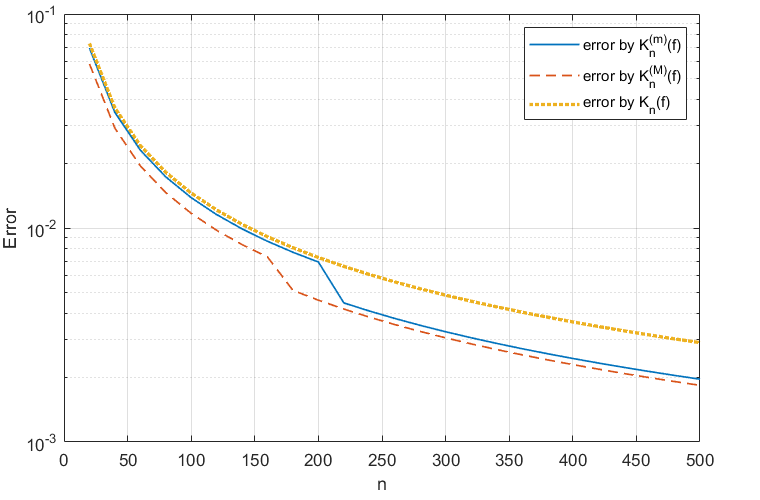}
\includegraphics[height = 4.2cm]{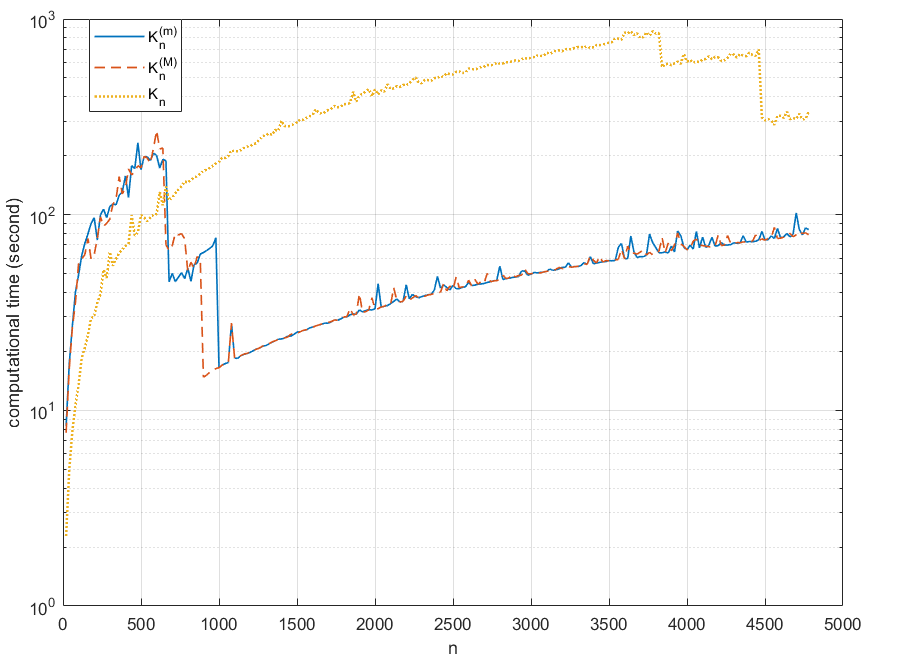} \caption{Left: errors for
the approximation of the function $f$ in $L^{1}\left(  \left[  0,1\right]
,\left[  0,1\right]  \right)  $ by $K_{n}^{(m)}(f),K_{n}^{(M)}(f)$ and
$K_{n}(f)$. Right: Comparison of the computational times to calculate the
functions $K_{n}^{(m)}(f)$, $K_{n}^{(M)}(f)$ and $K_{n}(f)$.}%
\label{fig2}%
\end{figure}%

\[
\overset{\text{\textbf{Table \ref{table1}: }Comparison of approximation errors
of\ Kantorovich type neural network operators}\ }{%
\begin{tabular}
[c]{cccc}\hline\hline
$n$ & $||K_{n}\left(  f\right)  -f||_{1}$ & $||K_{n}^{\left(  m\right)
}\left(  f\right)  -f||_{1}$ & $||K_{n}^{\left(  M\right)  }\left(  f\right)
-f||_{1}$\\\hline\hline
$10$ & 0.1457 & 0.1386 & 0.1171\\\hline
$30$ & 0.0485 & 0.0462 & 0.0390\\\hline
$90$ & 0.0162 & 0.0154 & 0.0130\\\hline
$150$ & 0.0097 & 0.0092 & 0.0078\\\hline
$500$ & 0.0029 & 0.0020 & 0.0018\\\hline\hline
\end{tabular}
\ \ \ }\label{table1}%
\]

\vspace{3mm}

\textbf{\noindent Application to signal denoising and ECG signal filtering.}
We finally provide some applications of the Kantorovich type max-min operators
to signal denoising. The incorporation of the Kantorovich information in the
max-min operator can be interpreted as a pre-processing step of the actual
max-min approximation in which a preliminary linear convolution filter is
applied to the initial signal. In general, this preliminary filtering allows
for an additional noise reduction and a smoothing of the signal.

To see the denoising benefits of the Kantorovich variant $K_{n}^{(m)}(f)$
compared to the classical sampling variant $F_{n}^{(m)}(f)$ of the neural
network operator, we apply both variants to the discontinuous test function
$f$ introduced above equipped with additional normally distributed Gaussian
noise. Using $n = 8000$ samples and a suitable scaled kernel $\phi_{\sigma
_{l}}(0.1 x)$ based on the logistic sigmoidal function $\sigma_{l}$, the
corresponding results are illustrated in Figure \ref{fig3}. In order to
approximate the local integrals of the noisy function and to calculate the
Kantorovich information, we used a Riemann sum based on additional refined
samples of the signal. With other quadrature rules as the trapezoidal rule,
similar results were obtained. It is visible that the usage of the Kantorovich
information of the signal instead of point evaluations leads to an improved
denoising of the noisy signal.

\begin{figure}[ptb]
\centering
\includegraphics[scale = 0.58]{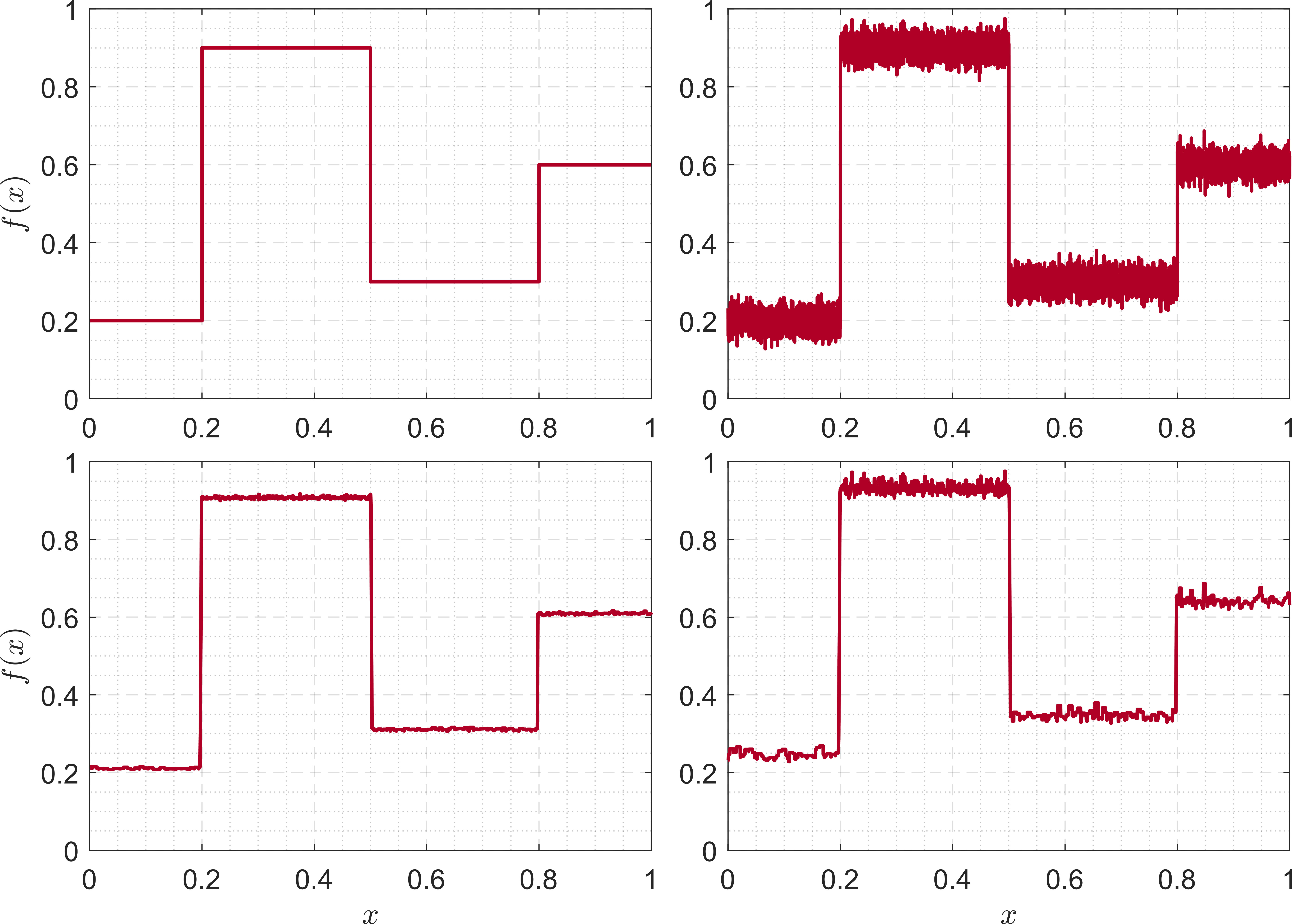}\caption{Top row: original
signal $f$ (left) and signal $f$ equipped with Gaussian noise (right). Bottom
row: Kantorovich variant $K_{n}^{(m)}(f)$ (left) compared to the classical max-min
variant $F_{n}^{(m)}(f)$ (right) of the max-min neural network operator
applied to the noisy signal.}%
\label{fig3}%
\end{figure}

For another comparison between the operators $K_{n}^{(m)}$ and
$K_{n}^{(M)}$, we apply both on an ECG signal describing $5$ heart beats
(using $n = 1600$ time samples) of patient 101 taken from the MIT-BIH
Arrhythmia Database \cite{moody}. Here we use the mean value of two consecutive time samples
to approximate the Kantorovich information and apply the operators
$K_{n/2}^{(m)}$ and $K_{n/2}^{(M)}$ to the ECG signal with the kernel $\phi_{\sigma_{l}}(2 x)$.
\begin{figure}[h]
\centering
\includegraphics[scale = 0.35]{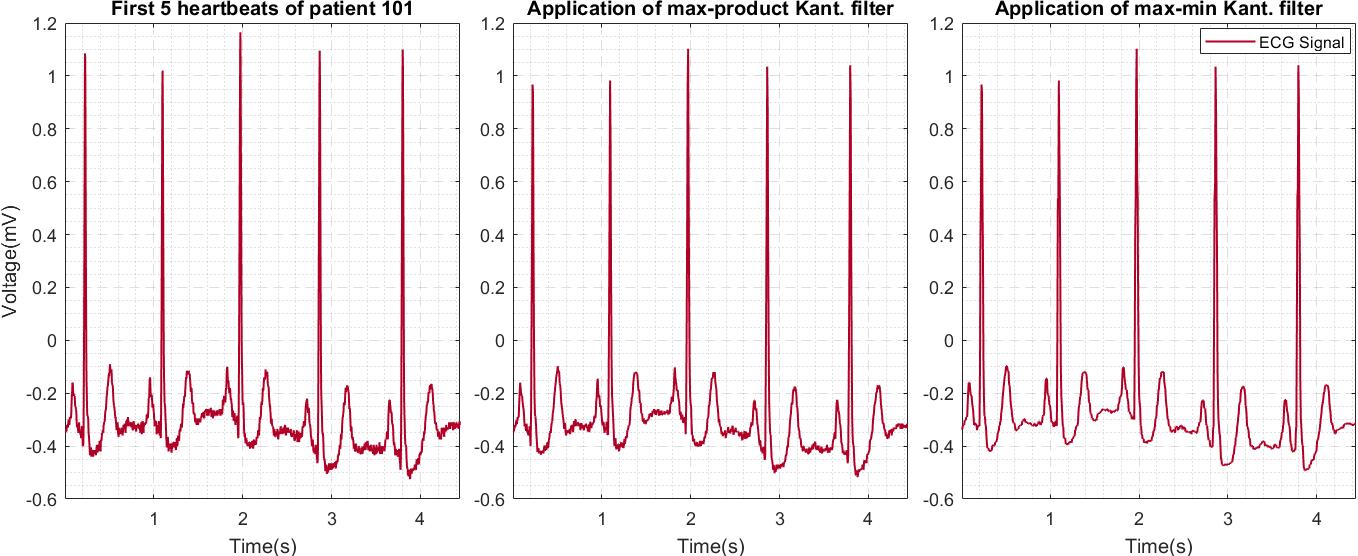}\caption{Comparison of the Kantorovich forms of the max-min and max-product Neural Network operators}%
\label{fig4}%
\end{figure}
Both operators provide denoised approximations of the ECG signal, the max-min Kantorovich variant having stronger smoothing effects, as shown in Figure \ref{fig4}.

\section{Concluding Remarks}

In this study, we investigated the Kantorovich variant of the max-min neural
network operators, analysing its convergence and approximation properties in
the $L^{p}$-spaces in more detail. In addition, we included some numerical
examples to underpin the theoretical results and to outline possible practical
applications of the operators for the denoising of biomedical signals. Our experiments demonstrate that max-min Kantorovich form of the network operators achieve superior results in denoising compared to their max-product counterparts. In the future, we aim to study the $N$-dimensional cases of these neural network operators, enabling us to analyze and process not only images but also higher-dimensional datasets.

\section{Acknowledgements}

The first author was funded by the Scientific and Technological Research
Council of T\"{u}rkiye (T\"{U}B\.{I}TAK). The second and third author were
funded by GNCS-IN$\delta$AM and the European Union - NextGenerationEU under
the National Recovery and Resilience Plan (NRRP), Mission 4 Component 2
Investment 1.1 - Call PRIN 2022 No. 104 of February 2, 2022 of Italian
Ministry of University and Research; Project 2022FHCNY3 (subject area: PE -
Physical Sciences and Engineering) "Computational mEthods for Medical Imaging
(CEMI)". We also thank the Italian Network on Approximation (RITA) and the
topical group on ``Approximation Theory and Applications'' of the Italian
Mathematical Union.

\medskip

\textbf{\.{I}smail Aslan}

Hacettepe University

Department of Mathematics,

\c{C}ankaya TR-06800, Ankara, T\"{u}rkiye

E-mail: ismail-aslan@hacettepe.edu.tr

\bigskip

\textbf{Stefano De Marchi}

University of Padova

Department of Mathematics ``Tullio Levi-Civita'',

Via Trieste 63, 35121 Padova, Italy

E-mail: stefano.demarchi@unipd.it

\bigskip

\textbf{Wolfgang Erb}

University of Padova

Department of Mathematics ``Tullio Levi-Civita'',

Via Trieste 63, 35121 Padova, Italy

E-mail: wolfgang.erb@unipd.it

\bigskip

\end{document}